    \def\qed{\hfill$\sqcap\kern-8.0pt\hbox{$\sqcup$}$\\}
    \def\beq{\begin{eqnarray}}
    \def\eeq{\end{eqnarray}}
    \def\beqq{\begin{eqnarray*}}
    \def\eeqq{\end{eqnarray*}}
    \def\re{\textnormal {Re}}
    \def\im{\textnormal {Im}}
    \def\arg{\textnormal {arg}}
    \def\p{{\mathbb P}}
    \def\e{{\mathbb E}}
    \def\r{{\mathbb R}}
    \def\c{{\mathbb C}}
    \def\d{{\textnormal d}}
    \def\i{{\textnormal i}}
    \def\plus{{\scriptscriptstyle +}}
    \def\minus{{\scriptscriptstyle -}}
    \def\phiqp{\phi_q^{\plus}}
    \def\phiqm{\phi_q^{\minus}}
    \def\ee{{\textnormal e}}
    \def\mod{\textnormal {mod}}
    \def\xsup{{S}} 
    \def\xinf{{I}} 
    \def\pibar{\bar \Pi}
\newtheorem{theorem}{Theorem}
\newtheorem{lemma}{Lemma}
\newtheorem{proposition}{Proposition}
\theoremstyle{definition}
\newtheorem{definition}{Definition}
\newtheorem{example}{Example}
\newtheorem{remark}{Remark}
\title{
\textbf{On the Wiener-Hopf factorization for L\'evy processes with bounded positive jumps}
}
\author{
\textbf{
A. Kuznetsov
\footnote{Department of Mathematics and Statistics, 
York University, 
4700 Keele Street, 
Toronto, Ontario, 
M3J 1P3, Canada. Email: kuznetsov@mathstat.yorku.ca. Research supported by the
Natural Sciences and Engineering Research Council of Canada. }
}
,\, 
\textbf{
X. Peng
\footnote{Department of Mathematics, Hong Kong University of Science and Technology,
Clear Water Bay, Kowloon, Hong Kong.
Email: maxhpeng@ust.hk. Research supported
by the University Grants Committee of HKSAR of China and the Department of
Mathematics of HKUST.
}
}
}
\date{\footnotesize Current version: \today}
\begin{document}
\maketitle
\begin{abstract}
We study the Wiener-Hopf factorization for L\'evy processes with bounded positive jumps and arbitrary negative jumps. Using 
the results from the theory of entire functions of Cartwright class we prove that the positive 
Wiener-Hopf factor can be expressed as an infinite product in terms of the solutions to the equation $\psi(z)=q$, 
where $\psi$ is the Laplace exponent of the process. Under some additional regularity assumptions
on the L\'evy measure we obtain an asymptotic expression for these solutions, which is important for numerical computations. 
In the case when the process is spectrally negative with bounded jumps, we
 derive a series representation for the scale function in terms of the solutions to the equation $\psi(z)=q$. 
 To illustrate possible applications we discuss the implementation of numerical algorithms and present the results of several numerical experiments.

\bigskip

\noindent {\it Keywords:} L\'evy process, Wiener-Hopf factorization, entire functions of Cartwright class, distribution of the supremum, 
spectrally-negative processes, scale function

\medskip

\noindent{\it AMS 2000 subject classification: 60G51.}
\end{abstract}

\section{Introduction}\label{sec_intro}

Assume that we want to study the way in which one-dimensional L\'evy process $X$ exits a half-line or a finite interval. For example, we might be interested in the first passage time across a barrier, the overshoot/undershoot at the first passage time, the last time that the process was closest to the barrier, the location of the process at this time, etc.  These questions are usually referred to as ``exit problems" in the literature, and they have stimulated a lot of research in recent years due to numerous applications in such diverse areas as 
actuarial mathematics, mathematical finance, queueing theory and optimal control.  

Let us denote the  supremum  $\xsup_t=\sup\{X_s: 0\le s \le t\}$ and infimum $\xinf_t=\inf\{X_s: 0\le s \le t\}$, and let  $\ee(q)$ be an exponentially distributed random variable with parameter $q>0$, which is independent of the process $X$.
It is an established fact that exit problems are closely related to the Wiener-Hopf factorization, which studies the
distribution of  $\xsup_{\ee(q)}$ and $\xinf_{\ee(q)}$. For example, if we know the positive Wiener-Hopf factor (which is defined as the Laplace transform of $\xsup_{\ee(q)}$), then through the Pecherskii-Rogozin identity \cite{Pecherskii} we know the joint Laplace transform of the first passage time
and the overshoot. The bad news is that for general L\'evy processes the Wiener-Hopf factors cannot be obtained in closed form, therefore the best that we can do is to try to find rich enough families of L\'evy processes with special analytical properties, for which we can say something 
useful about the distribution of  $\xsup_{\ee(q)}$ and $\xinf_{\ee(q)}$. 

Let us look at the existing examples of L\'evy  processes for which one can identify the Wiener-Hopf factors and the distribution of extrema. In the general case, when the process has jumps of both sides, this list includes processes with jumps having rational transform \cite{Mordecki,Pistorius} and recently introduced meromorphic processes \cite{KuzKypJC}. The first class includes processes with hyper-exponential \cite{Cai,JP,Kou} and phase-type jumps \cite{Asmussen2,Asmussen}, while meromorphic processes include Lamperti-stable processes \cite{CaCha, Caballero2008,  Chaumont2009, Patie2009}, 
hypergeometric processes \cite{Caballeroetal2009, KyPavS, KyPaRi}, $\beta$-processes \cite{Kuz-beta} and $\theta$-processes \cite{Kuz-theta}. 
In the simpler case when the process is spectrally negative (which means essentially that it has only negative jumps) it turns out that both of the same two 
classes provide analytically tractable formulas, however in this case there also 
exist other interesting families, such as the processes constructed in \cite{HuKy} (see also \cite{KuKyRi}). 

One may wonder what is so special about these particular processes, that makes it possible to find the Wiener-Hopf factorization explicitly? It turns out that in all cases the Laplace exponent, defined as $\psi(z)=\ln ( \e[\exp(z X_1)])$, has some analytical structure which allows to factorize it as a product of two functions, which are analytic in the left/right complex halfplane. It is not surprising that 
the analytic structure of $\psi(z)$ plays such an important role, as there is a close
connection between Wiener-Hopf factorization and the Riemann boundary value problems, see \cite{Fourati},  \cite{Kuz2010c} and the references therein.  
For example, if the process has hyper-exponential jumps \cite{Cai}, then $\psi(z)$ is a rational function and if $X$ is a meromorphic process then $\psi(z)$ is a meromorphic function of a very special type, in both cases these functions can be easily factorized as products of two functions. One can formulate a general ``meta-theorem": Wiener-Hopf factorization can be obtained explicitly if and only if $\psi(z)$ can be extended to a meromorphic function in the left or right complex halfplane.  This principle helps to explain why no one has yet produced an explicit Wiener-Hopf factorization for one of the processes which are widely used in mathematical finance, such as VG, CGMY/KoBoL or generalized tempered stable processes (see \cite{Cont} and the references therein for more information about these families of L\'evy processes). It turns out that in all these cases the Laplace exponent has a logarithmic or algebraic branch point in the  complex plane, and, therefore, cannot be extended meromorphically. At the same time, we can use this meta-theorem to produce a large class of processes for which there is some hope to have an explicit Wiener-Hopf factorization: if the process has bounded jumps then it follows quite easily from the L\'evy-Khintchine formula that the Laplace exponent $\psi(z)$ is an entire function, and it might be possible to factorize it as a product of two functions and obtain some
useful information about the Wiener-Hopf factorization.

In this paper we consider a more general class: L\'evy processes with bounded positive jumps. There are two main reasons, one theoretical
and one more practical, why we are interested
in studying this class of processes. First of all, one can see that this is a very large class. In a certain sense it is ``dense" in the class of all L\'evy processes: clearly, any L\'evy measure can be approximated arbitrarily close by
truncating it at a large positive number. Therefore studying the Wiener-Hopf factorization for this class will lead to a better understanding 
of related results for general L\'evy processes. The second reason is that there are several situations where processes with bounded positive or negative jumps would be natural candidates for modeling purposes. One important example is ruin problem for the insurance company which is protected by the reinsurance agreement. In this case the size of each claim is essentially capped at a fixed level, and the amount of the claim  above this level is being covered by the reinsurer. The value of the insurance company can be conveniently modeled by a spectrally negative L\'evy process with bounded jumps, and now we have an interesting problem of how to compute numerically such important quantities 
as the ruin probability, discounted penalty function, etc.

It is instructive to draw a parallel with the results of Lewis and Mordecki  \cite{Mordecki} on processes with positive jumps of rational transform (see also recent paper by Fourati \cite{Fourati} on double-sided exit problem for this class of processes). In their case the Laplace exponent of the ascending ladder process $\kappa(q,z)$ (see \cite{Kyprianou} for the definition of this object) is a rational function, with all singularities in the left half-plane $\re(z)<0$. In our case it will turn out that $\kappa(q,z)$ is an entire function of a very special type: it belongs to the so-called Cartwright class (see \cite{Levin1980} and the proof of Theorem \ref{thm_main}). This makes it possible to factor it as an infinite product and to identify the Wiener-Hopf factors.  There are also some similarities between the analytical structure for L\'evy processes with bounded positive jumps and meromorphic processes \cite{KuzKypJC}. In both cases the positive Wiener-Hopf factor is given as an infinite product involving the solutions to the equation $\psi(z)=q$ in the half-plane $\re(z)>0$.  
The major difference is that in the case of 
meromorphic processes the solutions to the equation $\psi(z)=q$ are all real and simple, while they are complex when the process has bounded positive jumps, and this fact makes the analytical theory more interesting and the computations somewhat more challenging.

The paper is organized as follows: in Section \ref{sec_results} we present our main results on the Wiener-Hopf factorization for processes with bounded positive jumps, and we obtain an expression for the Wiener-Hopf factors as an infinite product in terms of the solutions to $\psi(z)=q$. 
We also study the asymptotics of these solutions, which will turn out to be very important for applications and numerical computations. 
In  Section \ref{sec_scale_functions} we consider the spectrally negative case, and we 
obtain a series representation for the scale function $W^{(q)}(x)$.  A brief discussion 
of numerical methods and the results of several numerical experiments are presented in Section \ref{sec_numerics}, 
while Section \ref{sec_proofs} contains the proofs of all results.

\section{Processes with bounded positive jumps}\label{sec_results}

Let us first introduce some notations and definitions. The L\'evy measure of the process $X$ will be denoted by $\Pi(\d x)$, and we will use 
the following notations for its tails:  $\pibar^+(x)=\Pi((x,\infty))$ for $x>0$ and $\pibar^-(x)=\Pi((-\infty,x))$ for $x<0$.  
In this paper we consider the class of processes with bounded positive jumps, 
thus we will assume that the L\'evy measure $\Pi$ has support on $(-\infty,k]$. Here $k$ is the right boundary of the support of $\Pi$, that is
 \beq\label{assmpt1}
  k=\inf\{ x>0: \pibar^+(x)=0\}.
 \eeq
We will also assume that $k>0$, so that we exclude the spectrally negative case, which will be considered in the next section. Note that at this 
stage we do not impose any restrictions on the L\'evy measure on the negative half-line.

The Laplace exponent of the process $X$ is defined as $\psi(\i z)=\ln(\e[\exp(\i zX_1)])$ for $z\in \r$, 
and it can be expressed by the  L\'evy-Khintchine formula as follows
\beq\label{Levy_Khinthine2}
\psi(z)=\frac12 \sigma^2 z^2 +\mu z + \int\limits_{-\infty}^{k} \left( e^{z x}-1- zx h(x) \right) \Pi(\d x), 
\eeq
where $\sigma \ge 0$, $\mu \in \r$ and $h(x)$ is the cutoff function. When the process has jumps of bounded variation, or equivalently, when
\beq\label{fin_var_condition}
\int\limits_{-1}^1 |x| \Pi(\d x) < \infty,
\eeq
we will assume that $h(x)\equiv 0$, then $\mu$ can be interpreted as the linear drift of the process. 
When the jump part of the process has infinite variation, or equivalently, when condition (\ref{fin_var_condition}) is violated, 
we will assume that $h(x)={\mathbf 1}_{\{x>-1\}}$ (or $h(x)\equiv 1$ if $\e[|X_1|]$ is finite). Note that formula (\ref{Levy_Khinthine2}) implies that the Laplace exponent $\psi(z)$ can be analytically continued into the half-plane $\re(z)>0$. Also note that $\psi(z)$ is real when $z>0$, 
and that $\overline{ \psi(z)}=\psi(\overline{z})$. In particular, the last property implies that if for some 
$q\in \r$ the number $z \in \c$ is a solution to
the equation $\psi(z)=q$, then so is $\overline{z}$.  

Everywhere in this paper we will denote the first quadrant of the complex plane as 
\beqq
{\mathcal Q}_1:=\{z\in \c: \; \re(z)>0, \; \im(z)>0\},
\eeqq
and we will always use the principal branch of the logarithm and the power function, that is the branch cut will be taken along the negative half-line and for all $z\in \c$ we have $\arg(z) \in (-\pi, \pi]$.

\subsection{Analytic properties of the Wiener-Hopf factors}\label{subsec_results}

The following theorem is our first main result. It describes the analytic structure of the Wiener-Hopf factors for processes with bounded positive
jumps. 
\begin{theorem}\label{thm_main}
Assume that $q>0$. Equation $\psi(z)=q$ has a unique positive solution $\zeta_0$ and 
infinitely many solutions in ${\mathcal Q}_1$, which we denote by $\{\zeta_n\}_{n\ge 1}$. Assume that $\zeta_n$ are arranged 
 in the order of increase of the absolute value.  
 The following statements are true:
 \begin{itemize}
  \item[(i)] $\zeta_0$ has multiplicity one and $\re(\zeta_n)\ge \zeta_0$ for all $n\ge 1$.
  \item[(ii)] The series  $\sum_{n\ge 1} \re\left(\zeta_n^{-1} \right)$ converges.
 \item[(iii)] All of the numbers $\{\zeta_n\}_{n\ge 1}$, except possibly those of a set of zero density, lie inside arbitrary small angle
 $\pi/2-\epsilon<\arg(z)<\pi/2$, and the density of zeros inside this angle is equal to
 \beq\label{density_of_zeros}
\lim\limits_{r\to +\infty} \frac{\#\{\zeta_n : |\zeta_n|<r \; \textnormal{ and } \; \pi/2-\epsilon<\arg(\zeta_n)<\pi/2\} }{r}=\frac{k}{2\pi}.
 \eeq
 \item[(iv)] The Wiener-Hopf factors can be identified as follows: for $\re(z)\ge 0$ 
 \beq\label{wh_factor}
\begin{cases}
\displaystyle \phiqp(\i z):=\e \left[ e^{- z S_{\ee(q)}} \right]= e^{\frac{ k z}2 } \left( 1+\frac{z}{\zeta_0}\right)^{-1}
  \prod\limits_{n\ge 1} \left(1+\frac{ z}{\zeta_n}\right)^{-1}\left(1+\frac{z}{\bar \zeta_n}\right)^{-1}, \\
 \displaystyle  \phiqm(-\i z):= \e \left[ e^{z I_{\ee(q)}} \right]=\frac{q}{q-\psi(z)} \frac{1}{\phiqp(-\i z)}.
\end{cases}
 \eeq
 \end{itemize}
\end{theorem}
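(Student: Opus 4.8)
The plan is to realise the positive Wiener--Hopf factor through the Laplace exponent of the ascending ladder height process and then to bring in the theory of entire functions of Cartwright class. By the Wiener--Hopf factorization and standard ladder-process theory (see \cite{Kyprianou}) one has $\phiqp(\i z)=\kappa(q,0)/\kappa(q,z)$, where $\kappa(q,\cdot)$ is the Laplace exponent of the (killed) ascending ladder height process, so that
\beqq
\kappa(q,z)=\mathtt{k}_q+\mathtt{d}_q z+\int_{(0,k]}\left(1-e^{-z\xi}\right)\Upsilon_q(\d\xi),\qquad \mathtt{k}_q>0,\ \mathtt{d}_q\ge0,
\eeqq
and the ladder L\'evy measure $\Upsilon_q$ is supported on $(0,k]$ because an overshoot of $X$ above a level cannot exceed the jump producing it, which by (\ref{assmpt1}) is at most $k$. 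From this representation I would record the elementary facts that $\kappa(q,\cdot)$ is entire, that $\overline{\kappa(q,z)}=\kappa(q,\bar z)$, that $\re\kappa(q,z)\ge\mathtt{k}_q>0$ for $\re(z)\ge0$ (so $\kappa(q,\cdot)$ has no zeros in the closed right half-plane), and that $|\kappa(q,z)|\le C(1+|z|)e^{k|\re(z)|}$, whence $|\kappa(q,\i y)|\le C(1+|y|)$ on the imaginary axis. Using Vigon's amicable identities for $\Upsilon_q$ together with the fact that $\pibar^+$ has support reaching up to $k$, one checks moreover that $\sup(\mathrm{supp}\,\Upsilon_q)=k$, i.e.\ the exponential type of $\kappa(q,\cdot)$ in the direction of the negative real axis is exactly $k$.

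Next I would identify the zeros of $\kappa(q,\cdot)$. Strict convexity of $\psi$ on $(0,\infty)$ with $\psi(0)=0$ and $\psi(z)\to+\infty$ gives the unique simple positive root $\zeta_0$, and the elementary inequality $\re\psi(x+\i y)\le\psi(x)$ for $x\ge0$, immediate from (\ref{Levy_Khinthine2}), shows that $\psi(z)\ne q$ whenever $0\le\re(z)<\zeta_0$; hence every root of $\psi(z)=q$ in $\{\re(z)>0\}$ has real part at least $\zeta_0$, which is part~(i). The Wiener--Hopf identity $q-\psi(z)=c\,\kappa(q,-z)\widehat\kappa(q,z)$, with $\widehat\kappa$ the descending ladder exponent (zero-free on $\{\re(z)\ge0\}$ by the same argument as for $\kappa$), then shows that in $\{\re(z)>0\}$ the roots of $\psi(z)=q$ are exactly the negatives of the zeros of $\kappa(q,\cdot)$, all of which lie in $\{\re(z)<0\}$; thus the zero set of $\kappa(q,\cdot)$ is $\{-\zeta_0\}\cup\{-\zeta_n\}_{n\ge1}\cup\{-\bar\zeta_n\}_{n\ge1}$. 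The same identity shows that $\kappa(q,w)>0$ for real $w>-\zeta_0$ and $\kappa(q,w)<0$ for $w<-\zeta_0$, so $-\zeta_0$ is the unique real zero; since the indicator of $\kappa(q,\cdot)$ is not of the form $\re(ae^{\i\theta})$, it has infinitely many zeros, hence infinitely many $\zeta_n\in{\mathcal Q}_1$.

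The analytic core is the rotated function $g(z):=\kappa(q,\i z)$. By the estimates above $g$ is entire of exponential type and $|g(x)|\le C(1+|x|)$ on $\r$, so $\int_{\r}\frac{\log^{+}|g(x)|}{1+x^2}\,\d x<\infty$ and $g$ belongs to the Cartwright class \cite{Levin1980}; computing the indicator directly from the formula for $\kappa$ gives $h_g(\theta)=\max(k\sin\theta,0)$, i.e.\ the indicator diagram of $g$ is the vertical segment from $-\i k$ to $0$. By the factorization theorem for Cartwright functions, $g(z)=g(0)e^{\beta z}P_g(z)$, where $P_g(z)=\lim_{r\to\infty}\prod_{|z_n|<r}(1-z/z_n)$ runs over the zeros $z_n\in\{\i\zeta_0\}\cup\{\i\zeta_n\}\cup\{\i\bar\zeta_n\}$; the product converges because $\sum_n|\im(z_n)|/|z_n|^2=\sum_n\re(\zeta_n)/|\zeta_n|^2<\infty$, which is part~(ii). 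Since $\i\zeta_n$ and $\i\bar\zeta_n$ are mirror images in the imaginary axis with equal moduli, the densities of the zeros of $g$ on the two sides of the real axis coincide, so by the theory in \cite{Levin1980} the indicator diagram of $P_g$ is a symmetric vertical segment $[-\i\pi\delta,\i\pi\delta]$, $\delta$ being this common density, and $h_{P_g}(0)=h_{P_g}(\pi)=0$. Because indicator diagrams add, $\{\beta\}+[-\i\pi\delta,\i\pi\delta]$ must equal the segment from $-\i k$ to $0$, which forces $\pi\delta=k/2$ and $\beta=-\i k/2$: the first is the density formula (\ref{density_of_zeros}) (the concentration of the $\zeta_n$ near $\arg=\pi/2$ being part of the Cartwright zero-distribution theorem), and the second, upon putting $u=\i z$ and pairing each $\zeta_n$ with $\bar\zeta_n$, turns $g(z)=g(0)e^{\beta z}P_g(z)$ into $\kappa(q,u)=\kappa(q,0)\,e^{-ku/2}\,(1+u/\zeta_0)\prod_{n\ge1}(1+u/\zeta_n)(1+u/\bar\zeta_n)$, which is the first identity in (\ref{wh_factor}) via $\phiqp(\i u)=\kappa(q,0)/\kappa(q,u)$; the second identity is just $q/(q-\psi(z))=\phiqp(-\i z)\phiqm(-\i z)$ rearranged. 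The main difficulty I anticipate is precisely this last bookkeeping: unlike in the meromorphic case, $\kappa(q,\cdot)$ is genuinely entire of positive exponential type and is \emph{not} itself in the Cartwright class (it grows like $e^{k|z|}$ along the negative real axis), so one must argue with the rotation $\kappa(q,\i z)$ and take care that the exponential prefactor $e^{kz/2}$ in (\ref{wh_factor}) is correctly extracted --- which hinges on establishing $\sup(\mathrm{supp}\,\Upsilon_q)=k$ exactly.
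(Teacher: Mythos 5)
Your proposal follows essentially the same architecture as the paper's proof: part (i) via convexity and $\re\,\psi(z)\le\psi(\re(z))$; parts (ii)--(iv) via $\phiqp(\i z)=\kappa(q,0)/\kappa(q,z)$, the observation that the ladder L\'evy measure is supported in $(0,k]$ so that $\kappa(q,\cdot)$ is entire and zero-free in $\{\re(z)\ge0\}$, the linear bound on the imaginary axis putting $\kappa(q,\i\cdot)$ in the Cartwright class, and then the Levin theory delivering simultaneously the factorization, the zero density and $\sum\re(\zeta_n^{-1})<\infty$ (the paper cites Theorem 11, p.~251 of \cite{Levin1980} wholesale, while you unpack the same content through indicator diagrams --- a cosmetic difference). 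The one genuinely different step is the proof that $\sup(\mathrm{supp}\,\Upsilon_q)=k$, which fixes the exponential type and hence both the prefactor $e^{kz/2}$ and the density $k/(2\pi)$: the paper flags exactly this as the delicate point, says it could not find a probabilistic argument, and proves it analytically (Lemma \ref{lemma_support} applied to $\psi$ and to $\kappa(q,-\cdot)$, combined with the $O(z)$ bound on $\hat\kappa$ from Bertoin, yielding a contradiction if the support stopped short of $k$). You instead invoke Vigon's amicable identity (\'equation amicale invers\'ee); this route does work and is arguably more illuminating, but as written it needs two small additions: Vigon's identity concerns the unkilled ($q=0$) ladder height, so you must either quote a killed version or note that $\Lambda^{(q)}((x,\infty))>0$ iff $\Lambda^{(0)}((x,\infty))>0$ because the weight $e^{-qt}$ is strictly positive, and then conclude from $\bar\Upsilon^{(0)}(x)=\int_{[0,\infty)}\pibar^+(x+y)\,\hat U(\d y)>0$ for $x<k$, since the descending renewal measure $\hat U$ charges every neighbourhood of zero. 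Two further minor corrections: the mirror symmetry of the zeros $\i\zeta_n$, $\i\bar\zeta_n$ of $g$ is across the \emph{imaginary} axis (all zeros of $g$ lie in the upper half-plane), so the equal densities you need are those of the left and right half-planes, not ``the two sides of the real axis''; and the existence of these densities is itself part of the Cartwright--Levin theorem being invoked, not a consequence of the symmetry alone.
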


The proof of Theorem \ref{thm_main} can be found in Section \ref{sec_proofs}. 

\vspace{0.5cm}

\label{page_examples}
Let us present a very simple example, which will illustrate the results presented in Theorem \ref{thm_main}. Consider a process $X_t=k N_t$, where $N_t$ is the standard Poisson process. It is clear that $X$ is a process with bounded
positive jumps, and that its Laplace exponent is $\psi(z)=\exp(kz)-1$. Solving equation $\psi(z)=q$ for $q>0$ we find that
\beqq
\zeta_0=\frac{\ln(1+q)}{k}, \;\;\; \zeta_n=\frac{\ln(1+q)}{k}+\frac{2n\pi \i}{k}, \;\;\; n\ge 1.
\eeqq
It is an easy exercise to verify that the series  $\sum_{n\ge 1} \re\left(\zeta_n^{-1} \right)$ converges, thus we have checked part (ii) of the Theorem \ref{thm_main}. Next, all the zeros belong
to the vertical line $\re(z)=\zeta_0$, they are
equidistant and the spacing between them is equal to $2\pi/k$. This confirms statement (iii): all zeros (except for a finite number) lie 
inside arbitrary small angle
 $\pi/2-\epsilon<\arg(z)<\pi/2$, and the density of zeros inside this angle, which is inversely proportional to the spacing, is equal to $k/(2\pi)$. 
 Finally, since $X$ is a subordinator, we have $S_{\ee(q)}\equiv X_{\ee(q)}$, thus 
 \beqq
  \e \left[ e^{- z S_{\ee(q)}} \right]=\e \left[ e^{- z X_{\ee(q)}} \right]=\frac{q}{q-\psi(-z)}=
  \frac{q}{1+q-e^{-kz}}=e^{\frac{kz}2} \frac{\sinh\left(\frac12 \ln(1+q)\right)}{\sinh\left(\frac12( kz+ \ln(1+q))\right)},
 \eeqq
 and we see that the infinite product representation in (\ref{wh_factor}) is equivalent to the well-known 
 infinite product formula for the hyperbolic sine function. 

It is also easy to verify the validity of Theorem \ref{thm_main} for a  more general class of processes with double-sided jumps. Let us assume that for some $h>0$ 
the measure $\Pi(\d x)$ is supported on a finite subset of a lattice $h {\mathbb Z}$, that is there exist  $m,l \in {\mathbb N}$ such that
the support of $\Pi(\d x)$ is equal to  $\{-mh, -(m-1)h,\dots,-h,h,\dots,(l-1)h,lh\}$. In this case the right boundary of the support of the L\'evy measure is $k=lh$. Let $X$ be a compound Poisson process defined by the measure
$\Pi(\d x)$ (note that $X$ can be constructed as a linear combination of $m+l$ independent Poisson processes). 
From the L\'evy-Khintchine formula \eqref{Levy_Khinthine2} we find that the Laplace exponent of $X$ is given by
\beqq
\psi(z)=\sum\limits_{j=1}^m \Pi(\{-jh\})\left( e^{-jh z}-1 \right)+\sum\limits_{j=1}^l \Pi(\{jh\})\left( e^{jh z}-1 \right).
\eeqq
Note that the function $\psi(\ln(w)/h)$ is a rational function, therefore using the change of variables $z=\ln(w)/h$ the equation $\psi(z)=q$ can be transformed into a polynomial equation of degree $m+l$. It is possible to prove (we leave
it as an exercise) that this polynomial equation will have exactly $m$ solutions inside the open unit circle $\{w\in \c: |w|<1\}$ and exactly
$l$ solutions $\{w_1,\dots,w_l\}$ outside the closed unit circle. The solutions $\zeta_n$ to the original  equation $\psi(z)=q$ can now be found by solving equation $\exp(hz)=w_j$, thus they are given by
\beqq
\left\{ \frac{\ln(w_j)}h+\frac{2n\pi \i}{h}; \; n\in {\mathbb Z}, \; 1\le j \le l\right\}.
\eeqq
Again, it is easy to check that the series $\sum_{n\ge 1} \re\left(\zeta_n^{-1} \right)$ converges. Also, the solutions lie on $l$ vertical lines, therefore all of them (except for a finite number) 
lie inside arbitrary small angle $\pi/2-\epsilon<\arg(z)<\pi/2$,  and the density of zeros inside this angle is equal to $l\times h/(2\pi)=k/(2\pi)$. Infinite product representation for the positive Wiener-Hopf factor (\ref{wh_factor}) is again equivalent to elementary infinite product expressions for certain trigonometric functions. 

In the above two examples we were able to describe the solutions to the equation $\psi(z)=q$ in a very precise form, but in the general case this will be a transcendental equation and there is little hope to 
obtain as much information about $\zeta_n$. 
However, as our next result shows, we can obtain some very useful information about 
the asymptotic behavior of $\zeta_n$, provided that the Laplace exponent $\psi(z)$ has regular growth as $z\to \infty$. The connection between
the regularity of growth and distribution of the zeros of an entire function is well-known, see, for example, chapters 2 and 3 in \cite{Levin1980}. 
The main idea is that  analytic functions which grow regularly at infinity also enjoy certain regularity in the distribution
of zeros (and in fact the opposite is also true). In the following Theorem we impose a rather strong regularity condition on the 
growth of the Laplace exponent in the half-plane $\re(z)>0$ in order to obtain an explicit asymptotic approximation for the solutions to 
$\psi(z)=q$. This asymptotic expression for $\zeta_n$ would prove to be very useful in the next Section, when we will derive a series representation for the scale function $W^{(q)}(x)$, and later in Section \ref{sec_numerics}, when we will discuss numerical algorithms.

\begin{theorem}\label{thm_asymptotics}
Assume that 
\beq\label{psi_asymptotics}
\psi(z)=Ae^{kz}z^{-a}+Bz^b+ o\left(e^{kz}z^{-a}\right) + o \left(z^b\right),
\eeq
as $z\to \infty$ in the domain ${\mathcal Q}_1$. Let us also assume that $a\ge 0$ and $b>0$. Then all sufficiently large
solutions to $\psi(z)=q$ are simple and there exists $m \in {\mathbb Z}$ such that
\beq\label{zeta_asymptotics}
\zeta_{n+m}&=&\frac{1}{k}  \left[\ln \left( \bigg | \frac{B}{A} \bigg| \right)+(a+b) \ln \left(\frac{2n\pi }{k}\right) \right] \\ \nonumber
&+&
\frac{\i }{k} \left[ \arg\left( \frac{B}{A} \right)+\left( \frac12 (a+b) + 2n+1\right) \pi \right]+o(1)
\eeq
as $n \to +\infty$. 
\end{theorem}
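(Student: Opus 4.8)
The plan is to locate the solutions of $\psi(z)=q$ by comparing the function $\psi(z)-q$ with the explicit principal part
\[
f(z):=Ae^{kz}z^{-a}+Bz^b,
\]
whose zeros can be analysed directly. In ${\mathcal Q}_1$ the equation $f(z)=0$ is equivalent to $e^{kz}=-\tfrac BA z^{a+b}$, that is
\beqq
z=\frac{2\pi\i n}{k}+\Phi(z),\qquad \Phi(z):=\frac1k\Bigl[(a+b)\log z+\log\Bigl(-\tfrac BA\Bigr)\Bigr],\qquad n\in\zz .
\eeqq
Since $\Phi'(z)=(a+b)/(kz)=O(1/|z|)$, for every sufficiently large $n$ the map $z\mapsto\tfrac{2\pi\i n}{k}+\Phi(z)$ is a contraction on a disk about $2\pi\i n/k$ and has there a unique fixed point $w_n\in{\mathcal Q}_1$. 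Performing one iteration starting from $2\pi\i n/k$ and using $\log(2\pi\i n/k)=\ln(2\pi n/k)+\i\pi/2$ gives
\beqq
w_n=\frac{2\pi\i n}{k}+\frac1k\Bigl[\ln\bigl|\tfrac BA\bigr|+(a+b)\ln\tfrac{2\pi n}{k}\Bigr]+\frac{\i}{k}\Bigl[(a+b)\tfrac{\pi}{2}+\arg\bigl(-\tfrac BA\bigr)\Bigr]+o(1),
\eeqq
which, after writing $\arg(-B/A)=\arg(B/A)+\pi$ and absorbing any leftover multiple of $2\pi$ into the index, is precisely the right-hand side of \eqref{zeta_asymptotics} with $\zeta_{n+m}$ replaced by $w_n$.

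The next step is to confine the solutions of $\psi(z)=q$ to a small neighbourhood of the zero set of $f$. Writing $R(z):=\psi(z)-f(z)$, so that \eqref{psi_asymptotics} reads $|R(z)|=o(e^{k\re(z)}|z|^{-a}+|z|^b)$ as $z\to\infty$ in ${\mathcal Q}_1$, and using the identity $Ae^{kz}z^{-a}=q-Bz^b-R(z)$, one obtains for every solution with $|z|$ large the a priori bound $0<\re(z)\le\tfrac{a+b}{k}\ln|z|+O(1)$. Hence every large solution lies in the region where \eqref{psi_asymptotics} is effective, and there $|R(z)|=o(|z|^b)$. One can then apply Rouch\'e's theorem on the circles $|z-w_n|=\delta$ with $\delta>0$ small and fixed: $w_n$ is a simple zero of $f$ with $f'(w_n)=-kBw_n^b(1+o(1))$, so $|f(z)|\ge c\,\delta|w_n|^b$ on the circle, while $|(\psi(z)-q)-f(z)|=|R(z)-q|=o(|w_n|^b)$ there. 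Consequently, for all large $n$ the function $\psi(z)-q$ has exactly one zero $\zeta^{(n)}$ in $\{|z-w_n|<\delta\}$, and it is simple, since Rouch\'e preserves multiplicities and $f$ has a simple zero inside.

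To sharpen the localization, and to check that no large solution is missed, I would take logarithms in $Ae^{kz}z^{-a}=q-Bz^b-R(z)$ for any solution $z$ lying within $\delta$ of some $w_n$, subtract the identity $kw_n=\log(-B/A)+(a+b)\log w_n+2\pi\i n$, and use $R(z)=o(|z|^b)$ and $|z-w_n|\le\delta=o(|w_n|)$ to get $k(z-w_n)=(a+b)\log(z/w_n)+o(1)=o(1)$, hence $z=w_n+o(1)$. Together with the confinement step this shows that the family $\{\zeta^{(n)}\}$ exhausts the large solutions of $\psi(z)=q$ in ${\mathcal Q}_1$ and that $\zeta^{(n)}=w_n+o(1)$. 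Finally, the $\zeta^{(n)}$ lie in disjoint disks and $|\zeta^{(n)}|$ is increasing for large $n$; listing them by modulus and matching with the enumeration $\{\zeta_j\}_{j\ge 1}$ of Theorem \ref{thm_main} (which, by part (iii), has the same density $k/(2\pi)$) yields a fixed $m\in\zz$ with $\zeta_{n+m}=\zeta^{(n)}$ for all large $n$. Combined with $\zeta^{(n)}=w_n+o(1)$ and the formula for $w_n$, this gives \eqref{zeta_asymptotics}.

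The step I expect to be most delicate is the confinement: one must show that \emph{every} large solution of $\psi(z)=q$, not only those we construct, satisfies $\re(z)=O(\ln|z|)$ and therefore falls into one of the Rouch\'e disks. This is what rules out solutions escaping to infinity through the part of ${\mathcal Q}_1$ in which $\psi$ is dominated by $Ae^{kz}z^{-a}$, and it is where the precise form of \eqref{psi_asymptotics} --- both error terms being genuinely $o(\cdot)$, and uniformly so as $z\to\infty$ in ${\mathcal Q}_1$ --- is used. The remaining difficulty is bookkeeping: tracking the branch of $\log(-B/A)$ and the winding of $e^{kz}$ carefully enough to identify the integer shift $m$, and checking that the $o(1)$ in \eqref{zeta_asymptotics} is uniform in $n$.
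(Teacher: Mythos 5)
Your first half coincides with the paper's own argument: from $\psi(z)=q$ and \eqref{psi_asymptotics} one gets $e^{kz}=-\tfrac BA z^{a+b}(1+o(1))$, and taking moduli and then arguments confines every large root to $\re(z)=O(\ln|z|)$ and pins down its imaginary part up to the $2\pi \i n/k$ bookkeeping; this is exactly how the paper shows that every large solution is $o(1)$-close to one of the candidate points $z_n$. Where you genuinely differ is the converse (existence, uniqueness, simplicity near each candidate): you first construct the zeros $w_n$ of the principal part $f(z)=Ae^{kz}z^{-a}+Bz^b$ by a contraction/fixed-point argument and then apply Rouch\'e's theorem on fixed small circles $|z-w_n|=\delta$, whereas the paper runs the argument principle over long thin rectangles $L(n)$ of height $2\pi/k$ and length $\epsilon n$ anchored on the imaginary axis, estimating $\Delta\arg$ on each side via the perturbation result (Proposition \ref{prop_Delta_arg_property}) and the elementary geometric facts \eqref{fact1}--\eqref{fact2}. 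Your route is more local and gives simplicity in one stroke; note only that the lower bound $|f(z)|\ge c\,\delta|w_n|^b$ on the circle requires choosing $\delta$ small relative to $1/k$, since $f''$ is also of order $|w_n|^b$ there. The paper's rectangles have the compensating advantage that, being vertically contiguous of height exactly $2\pi/k$, they combine with the confinement to give uniqueness within each horizontal band immediately.

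The one place your write-up is logically loose is the exhaustion claim. Confinement ($\re(z)=O(\ln|z|)$) plus ``exactly one zero in each disk'' does not by itself exclude a large root lying outside all the disks, and your sharpening step is stated only ``for any solution lying within $\delta$ of some $w_n$'', which presupposes what is to be shown; likewise, in your closing paragraph the inference ``$\re(z)=O(\ln|z|)$ and therefore falls into one of the Rouch\'e disks'' is not automatic. The repair uses nothing beyond what you already wrote: apply the same take-logarithms-and-subtract computation to an \emph{arbitrary} large solution (the a priori bound gives $R(z)=o(|z|^b)$ in that region, the choice of branch of the logarithm produces the integer $n$, and then $\im(z)-\im(w_n)=O(1)$ together with real parts of size $O(\ln n)$ gives $\log(z/w_n)=o(1)$, hence $z=w_n+o(1)$). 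This is precisely the paper's first step. With that scope corrected, your index-matching by moduli and the identification of the fixed shift $m$ go through as indicated.
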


\noindent The proof of Theorem \ref{thm_asymptotics} is presented in Section \ref{sec_proofs}. 

\begin{remark}
Note that formula (\ref{zeta_asymptotics}) implies that $\zeta_n=(a+b)\ln(n)/k+2 \pi n \i /k+O(1)$ as $n\to \infty$, which
again confirms statements (ii) and (iii) of Theorem
\ref{thm_main}: the zeros cluster ``close'' to the imaginary axis, or to say it more precisely, $\arg(\zeta_n)\nearrow \pi/2$ as $n\to \infty$;
and secondly, the density of zeros in ${\mathcal Q}_1$ (which is inversely proportional to the average spacing between them) is equal to $k/(2\pi)$. 
\end{remark}

Condition $b>0$ in Theorem \ref{thm_asymptotics} implies that $\psi(\i z)\to \infty$ as $z\to \infty$, $z\in \r$, therefore $X$ cannot be a compound Poisson process
(see Proposition 2 in \cite{Bertoin}). This shows that the two examples considered on page \pageref{page_examples} do not satisfy the conditions 
of Theorem \ref{thm_asymptotics}, but if we take these compound Poisson processes and add a drift (or Brownian motion with drift) then it is easy to check that the Laplace exponent of this perturbed process 
will satisfy \eqref{psi_asymptotics}. A natural question  then is to describe sufficient conditions on the triple $\{\mu,\sigma,\Pi\}$, which
 defines the Laplace exponent via (\ref{Levy_Khinthine2}), which will ensure that $\psi(z)$ satisfies asymptotic relation (\ref{psi_asymptotics}). Below we present a set of sufficient conditions.

\begin{definition}
We will say that a real function $f(x)$ is piecewise $n$-times continuously differentiable on an interval $[a,b]$ if there exists a finite set of numbers $\{x_k\}_{1\le k \le m}$, such that
\begin{itemize}
 \item[(i)] $a=x_1<x_2<\dots<x_m=b$,
 \item[(ii)] $f \in {\mathcal C}^{n}([a,b] \setminus \{x_1,x_2,\dots,x_m\})$,
 \item[(iii)] for each $j=0,1,\dots,n$ and  $k=1,2,\dots,m$ there exist left and right limits $f^{(j)}(x_k-)$ and $f^{(j)}(x_k+)$.  
\end{itemize}
We will use the notation $f \in {\mathcal {PC}}^{n}[a,b]$. In the case of an open interval $(a,b)$ the definition of ${\mathcal PC}^{n}(a,b)$
is
very similar, except for condition $a<x_1$ and $x_m<b$. Similarly, one can define the remaining cases of intervals $(a,b]$ and $[a,b)$. 
\end{definition}

\begin{definition}\label{definition_Levy_measure} We say that a L\'evy measure is {\it regular} if the following two conditions are satisfied:
\begin{itemize}
 \item[(1)] There exist constants $\hat C, \hat \alpha$ and $\{\hat C_j,\hat \alpha_j\}_{1\le j \le \hat m}$  such that
\beq\label{pibar_minus_assmptn}
\pibar^-(x)-\hat C|x|^{-\hat \alpha}-\sum\limits_{j=1}^{\hat m} \hat C_j |x|^{-\hat \alpha_j} = O(1),\;\;\; x\to 0^-, 
\eeq
where  $\hat \alpha, \hat \alpha_j \in (-\infty,2)\setminus\{0,1\}$ and $\hat \alpha_j<\hat \alpha$.
\\

 \item[(2)] There exists $n \in {\mathbb N}\cup\{0\}$ such that 
 \begin{itemize}
 \item[(2a)] for some constants $C, \alpha$ and $\{C_j,\alpha_j\}_{1\le j \le m}$  we have
\beq\label{pibar_plus_assmptn}
\pibar^+(x)-Cx^{-\alpha}-\sum\limits_{j=1}^{m} C_j x^{-\alpha_j} \in {\mathcal {PC}}^{n+1}[0,k],
\eeq
where  $\alpha, \alpha_j \in (-\infty,2)\setminus\{0,1\}$ and $\alpha_j<\alpha$;
 \item[(2b)] $\pibar^+{}^{(n)}(k-) \ne 0$;
 \item[(2c)] $\pibar^+(x)\in {\mathcal C}^{n-1}(\r^+)$ (this condition is not needed for $n=0$).
 \end{itemize}
\end{itemize}
\end{definition}

\begin{remark}
Note that conditions (1) and (2a) imply that the Blumenthal-Getoor index 
\beq
\beta(\Pi)=\inf\left\{ \gamma>0 : \int_{-1}^1 |x|^{\gamma} \Pi (\d x)  < \infty \right \}
\eeq
is equal to $\beta(\Pi)=\max(\alpha,\hat \alpha,0)$.
\end{remark}

Definition \ref{definition_Levy_measure} is not very easy to interpret, therefore we will try to give some intuition behind these conditions.
 Conditions (1) and (2a) guarantee that the L\'evy measure is sufficiently well-behaved in the neighborhood of zero. This will help us to ensure 
 that the main term of $\psi(z)$ grows as $z\to \infty$ exactly as $z^b$, and does not contain any logarithmic terms.  
 Conditions (2b) and (2c) are slightly harder to interpret. Essentially, 
 they imply that the L\'evy measure restricted to $\r^+$ has its ``worst''
possible singularity at the right-end point of its support. Let us consider the following example, where conditions (2b) and (2c) 
are violated.   

 \begin{example}
Assume that the L\'evy measure is given by
\beq\label{example_Pi_dx}
\Pi(\d x)= {\mathbf 1}_{\{x<0\}}e^{x }\d x + {\mathbf 1}_{\{0<x<4\}} \d x + \delta_3(\d x),
\eeq
so that $\Pi(\d x)$ has an atom of mass one at $x=3$. 
Because of the atom at $x=3$ we know that $\pibar^+(x)$ is not continuous, therefore we are forced to take $n=0$ in the Definition \ref{definition_Levy_measure}. But since we have no atom at $x=k=4$, we find that $\pibar^+(k-)=0$, which violates condition (2b), thus we conclude that the measure $\Pi(\d x)$ is not regular. 

Next, let $X$ be a process which has a L\'evy measure \eqref{example_Pi_dx} and linear drift $\mu=1$. We will 
check that the Laplace exponent of the process $X$ does not satisfy \eqref{psi_asymptotics}. 
 We compute the Laplace exponent using the L\'evy-Khintchine formula 
\eqref{Levy_Khinthine2}
and find that it has asymptotics
\beqq
\psi(z)=z+\frac{e^{4z}}z+e^{3z}+O(1), 
\eeqq
as $z\to \infty$, $\re(z)>0$. Now, it is easy to see that in the domain $0<\re(z)<\frac12\ln|z|$ we will have
$\psi(z)=z+e^{3z}+o(e^{3z})$,
while in the domain $\re(z)>2\ln|z|$ we'll have $\psi(z)=e^{4z}/z+o(e^{4z}/z)$. This implies that we cannot find a single uniform asymptotic formula for $\psi(z)$
as in \eqref{psi_asymptotics}. This happens because the ``worst" singularity of $\Pi(\d x)$, which is the atom at $x=3$, is not located at the right boundary $x=k=4$. One can also check that asymptotic expression \eqref{psi_asymptotics} will be satisfied 
if we replace $\delta_3(\d x)$ by $\delta_4(\d x)$ in \eqref{example_Pi_dx} or if we add a second atom at $x=4$, 
and at the same time this will also give us a regular L\'evy measure according to the Definition \ref{definition_Levy_measure}. 
\end{example}

In the following example we exhibit a large family of regular L\'evy measures (it is an easy exercise to verify all 
the conditions of Definition \ref{definition_Levy_measure}). 
\begin{example}
Assume that the L\'evy measure  $\Pi(\d x)$ has a density $\pi(x)$ given by
\beq\label{pi_example}
\pi(x)=
  {\mathbf 1}_{\{x<0\}}\hat f(x)|x|^{-1-\hat \alpha}+ {\mathbf 1}_{\{0<x<k\}} f(x)x^{-1- \alpha} ,
\eeq
where 
$\alpha, \hat\alpha \in (-\infty,2)\setminus\{0,1\}$ 
and functions $f$, $\hat f$ satisfy the following conditions:(i) $f(x)$ and $\hat f(x)$ can be represented by convergent Taylor series in some neighborhood of zero;
(ii) $f(x)$ is ${\mathcal {PC}}^1[0,k]$; (iii) $f(k-)>0$. Then $\Pi(\d x)$ is regular. 
\end{example}

The above example shows that there are indeed many interesting L\'evy processes with regular L\'evy measure. For example, we can take one of the widely used processes in mathematical finance, such as CGMY/KoBoL or generalized tempered stable (see \cite{Cont}), truncate its L\'evy measure at any positive number and we will obtain a regular L\'evy measure.

The next Proposition shows that if the L\'evy process has a regular L\'evy measure, then its Laplace exponent 
satisfies the asymptotic expansion (\ref{psi_asymptotics}) and therefore the roots $\zeta_n$ have simple asymptotic approximation
given by \eqref{zeta_asymptotics}.
\begin{proposition}\label{prop_asymptotic_psi} 
 Assume that $X$ is not a compound Poisson process and that the L\'evy measure of $X$ is regular. Then
the asymptotic expression (\ref{psi_asymptotics}) is true, with parameters 
$ A=(-1)^n \pibar^+{}^{(n)}(k-)$, and $a=n$.
 The parameters $B$ and $b$ can be identified as follows:
\begin{itemize}
 \item[(i)] if $\sigma>0$, then $B=\sigma^2/2$ and $b=2$,
 \item[(ii)] if the process has paths of bounded variation and $\mu\ne 0$, then $B=\mu$  and $b=1$.
\end{itemize}
In the remaining cases, when the process has paths of unbounded variation and $\sigma=0$, or when the process has paths of bounded variation and $\mu=0$, we have
$b=\beta(\Pi)=\max(\alpha,\hat \alpha)$ and 
\beqq
B=
\begin{cases}
 -C e^{-\pi \i \alpha} \Gamma(1-\alpha), \; &\textnormal{ if } \; \alpha>\hat \alpha, \\
-\hat C \Gamma(1-\hat \alpha), \; &\textnormal{ if } \; \alpha<\hat \alpha,\\
-(C e^{-\pi \i \alpha} + \hat C)\Gamma(1-\alpha), \;  &\textnormal{ if } \; \alpha=\hat \alpha.
\end{cases}
\eeqq
Moreover, the asymptotic expression for $\psi'(z)$ can be obtained from (\ref{psi_asymptotics}) by taking derivative of the right-hand side.  
\end{proposition}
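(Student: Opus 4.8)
The plan is to substitute the regular L\'evy measure into the L\'evy--Khintchine representation \eqref{Levy_Khinthine2} and to read off the growth of $\psi(z)$, as $z\to\infty$ in ${\mathcal Q}_1$, from the behaviour of $\Pi(\d x)$ near the two ends $x=0$ and $x=k$ of its support. Fixing a small $\delta\in(0,k)$, I would split the jump integral into the part over $(k-\delta,k]$, the part over $(-\delta,\delta)$, and the part over $(-\infty,-\delta]\cup[\delta,k-\delta]$; on the last set the integrand is bounded (using $|e^{zx}|\le 1$ for $x\le 0$), so it contributes only a constant, a linear-in-$z$ term coming from the compensator, and something of the size of the error in \eqref{psi_asymptotics}, while $\tfrac12\sigma^2z^2$ and $\mu z$ are explicit. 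Thus the problem reduces to the asymptotics of $\int_{k-\delta}^{k}e^{zx}\,\Pi(\d x)$ (which produces the $Ae^{kz}z^{-a}$ term) and of the compensated jump integral over $(-\delta,\delta)$, treated separately on each half-line (which produces the $Bz^{b}$ term).

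For the piece near $x=k$ I would integrate by parts to express $\int_{k-\delta}^{k}e^{zx}\,\Pi(\d x)$ through the tail $\pibar^+$, substitute $x=k-t$, and analyse $z e^{kz}\int_0^{\delta}e^{-zt}\pibar^+(k-t)\,\d t$ by Watson's lemma. Conditions (2b)--(2c) of Definition \ref{definition_Levy_measure} and the local analyticity of the algebraic terms subtracted in \eqref{pibar_plus_assmptn} guarantee that $\pibar^+$ is ${\mathcal C}^{n-1}$ up to $x=k$ with $\pibar^+{}^{(j)}(k-)=0$ for $1\le j\le n-1$ and $\pibar^+{}^{(n)}(k-)\neq 0$, so $\pibar^+(k-t)=\tfrac{(-1)^n}{n!}\pibar^+{}^{(n)}(k-)\,t^{n}+O(t^{n+1})$ (the error being controlled by the piecewise $(n{+}1)$-smoothness in \eqref{pibar_plus_assmptn}); Watson's lemma then gives exactly $A e^{kz}z^{-a}+o(e^{kz}z^{-a})$ with $a=n$ and $A=(-1)^n\pibar^+{}^{(n)}(k-)$, the endpoint $t=\delta$ contributing only $O(e^{(k-\delta)\re(z)})$.

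For the piece near $x=0$ I would subtract from $\pibar^{\pm}$ their singular parts as in \eqref{pibar_minus_assmptn}--\eqref{pibar_plus_assmptn}. The integral of $e^{zx}-1-zxh(x)$ against each pure power reduces, after the substitution $u=\pm zx$ and a rotation of the contour onto the ray where $\re(zx)<0$, to the classical identity $\int_0^\infty(e^{-u}-1+u)u^{-1-\gamma}\,\d u=\Gamma(-\gamma)$ and its analytic continuation in $\gamma\in(-\infty,2)\setminus\{0,1\}$. On the negative half-line the contour requires no rotation, since $e^{zx}$ already decays there for $\re(z)>0$, which is why the coefficient contributed by the negative jumps is $-\hat C\,\Gamma(1-\hat\alpha)$; on the positive half-line the rotation sweeps through $\arg(z)\in(0,\pi/2)$ and leaves the extra factor $e^{-\pi\i\alpha}$, producing $-C\,e^{-\pi\i\alpha}\,\Gamma(1-\alpha)$ (the two add when $\alpha=\hat\alpha$). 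These yield terms of orders $z^{\alpha},z^{\hat\alpha}$ and lower-order $z^{\alpha_j},z^{\hat\alpha_j}$, together with at most linear and constant corrections; comparing their sizes with $\tfrac12\sigma^2z^2$ and $\mu z$, and using that $\max(\alpha,\hat\alpha)>1$ in the unbounded-variation case $\sigma=0$ (since $\alpha,\hat\alpha\neq 1$), identifies $(B,b)$ in each of the listed cases. The assertion for $\psi'$ then follows by repeating the analysis for $\psi'(z)=\sigma^2z+\mu+\int_{-\infty}^{k}x\,(e^{zx}-h(x))\,\Pi(\d x)$ (differentiation under the integral being justified by the same domination used for $\psi$) and checking that the leading terms so obtained coincide with the termwise derivative of the right-hand side of \eqref{psi_asymptotics}.

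The step I expect to be the main obstacle is the uniform control of the error terms as $z\to\infty$ throughout ${\mathcal Q}_1$, and not just along the positive real axis; this is where the full strength of Definition \ref{definition_Levy_measure} is needed. Three points require genuine care: a version of Watson's lemma valid when $\arg(z)\nearrow\pi/2$ for the contribution near $x=k$; the bookkeeping of which of the two main terms $e^{k\re(z)}|z|^{-a}$ and $|z|^{b}$ dominates along a given approach to infinity (they exchange dominance, which constrains how the jump integral should be localised); and, near $x=0$, showing that the contribution of the ``remainder'' measures left after the singular powers have been subtracted --- a piecewise $(n{+}1)$-smooth function on $[0,k]$ by \eqref{pibar_plus_assmptn}, and only an $O(1)$ tail by \eqref{pibar_minus_assmptn} --- is negligible compared with $z^{b}$. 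The last point is handled by one more integration by parts, which turns it into integrals of $z\,e^{zx}$ against bounded (and, on the positive side, piecewise smooth) functions; these are then estimated by Riemann--Lebesgue and integration-by-parts arguments that exploit $\im(z)\to\infty$.
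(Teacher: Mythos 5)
Your route is sound in outline and genuinely different from the paper's. The paper does not localise: it splits the L\'evy density globally into exact power parts plus a smooth part, evaluates the negative-side power integral by the exact identity (\ref{int_identity_1}), writes the positive-side power integral as an incomplete gamma / confluent hypergeometric function (\ref{eqn_F_alpha_k_z}) and cites its known sectorial asymptotics (which is where both the $z^{\alpha}$ term with the phase $e^{-\pi\i\alpha}$ and the exponentially large terms come from), and treats the smooth part by $n$-fold integration by parts, the cancellation of the lower-order $e^{kz}z^{-m}$ terms being enforced by condition (2c). Your version gets $A=(-1)^n\pibar^+{}^{(n)}(k-)$, $a=n$ more directly: one integration by parts onto the full tail near $x=k$ plus Watson's lemma, with (2b)--(2c) giving $\pibar^+{}^{(j)}(k-)=0$ for $0\le j\le n-1$ (note you need $j=0$ as well, which follows from continuity of $\pibar^+$ on $\r^+$ when $n\ge 1$), and this avoids the cancellation bookkeeping altogether; the price is that you must prove a boundary-of-sector version of Watson's lemma by hand, exactly the uniformity issue the paper delegates to Erd\'elyi's tables. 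That issue is real but resolvable, precisely because the error allowed in (\ref{psi_asymptotics}) is $o(e^{kz}z^{-a})+o(z^{b})$ rather than $o(e^{kz}z^{-a})$ alone: split ${\mathcal Q}_1$ into $\{\re(z)<\ln\ln\im(z)\}$ and its complement, as the paper does for the terms $e^{x_1 z}z^{-n}$ coming from interior non-smooth points; the same split absorbs your $e^{(k-\delta)z}$- and $e^{\delta z}$-type localisation errors.

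The one step whose stated justification does not suffice is the remainder on the negative half-line near $0$. Condition (\ref{pibar_minus_assmptn}) gives only a bounded remainder $R(x)=\pibar^-(x)-\hat C|x|^{-\hat\alpha}-\sum_j\hat C_j|x|^{-\hat\alpha_j}$, and your plan (one integration by parts, then Riemann--Lebesgue) leaves you with $z\int_{-\delta}^{0}e^{zx}R(x)\,\d x$, which boundedness plus a rate-free Riemann--Lebesgue argument controls only by $o(|z|)$ when $\re(z)$ stays bounded. That is adequate when $b\ge 1$, but in the case $\sigma=0$, paths of bounded variation and $\mu=0$ one has $b=\max(\alpha,\hat\alpha)<1$, and $o(|z|)$ is not $o(z^{b})$; since $R$ carries no smoothness, no oscillation estimate can close this. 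The paper instead reads condition (1) as a decomposition of $\Pi(\d x)$ on $\r^-$ into the power densities plus a finite signed measure $\nu$, whose contribution $\int(e^{zx}-1)\,\nu(\d x)$ is $O(1)=o(z^{b})$ outright; you should treat the negative-side remainder the same way (no extra integration by parts, just the trivial bound for a finite measure) rather than through Riemann--Lebesgue. With that repair, and with the Watson-type lemma carried out as you indicate, your argument reproduces the proposition, including the identification of $B$ and $b$ in each case and the statement for $\psi'$ by the same analysis applied to the differentiated integrand.
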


\noindent The proof of Proposition \ref{prop_asymptotic_psi} can be found in Section \ref{sec_proofs}.

\subsection{Partial fraction decomposition and distribution of $S_{\ee(q)}$}\label{subsection_conjecture}  

As we have mentioned in Section \ref{sec_intro}, in the case when $X$ has positive jumps of rational transform the 
positive Wiener-Hopf factor is a rational function (see \cite{Mordecki}). By performing the partial fraction decomposition
 of this rational function and inverting the Laplace transform one can obtain the distribution of $S_{\ee(q)}$ explicitly. 
 The same procedure works for meromorphic processes (see \cite{KuzKypJC}), though in this case we must work with meromorphic functions
 instead of rational functions, and things become slightly more technical. In our case, when the process has bounded positive jumps,
formula \eqref{wh_factor} tells us that the positive Wiener-Hopf factor $\phiqp(\i z)$ is also a meromorphic function, thus 
we might hope to follow the same procedure and obtain a series representation for the distribution of $S_{\ee(q)}$, which would be 
very useful for applications. Unfortunately it turns out that proving existence of the partial fraction decomposition 
for  $\phiqp(\i z)$ of the form \eqref{wh_factor} is a much harder problem, and we were not able to give a 
completely rigorous proof of such a result or to find such a result in the existing literature. 
However, if one assumes that such a partial fraction decomposition exists, it is rather easy to obtain the form of its coefficients and 
 the resulting expression for the distribution of $S_{\ee(q)}$. Let us sketch here the main steps, and 
 later, in Section \ref{sec_numerics} we perform several numerical experiments which seem to confirm our conjecture. 

\label{discussion_simple_zeros}

First of all, it is very likely that all the zeros  $\{\zeta_n\}_{n\ge 1}$ of the function $\psi(z)-q$ are simple. This 
was proved in Theorem \ref{thm_asymptotics} for large zeros and for $\zeta_0$. For other roots one could use the following (rather informal) argument. We know that $z$ is a solution of $\psi(z)=q$ with multiplicity greater than one if and only if $\psi'(z)=0$. We can rephrase this statement: equation $\psi(z)=q$ has a solution of multiplicity greater than one if only if $q=\psi(\zeta)$, where $\zeta$ is a root of $\psi'(z)$.  But $\psi'(z)$ is analytic in the half-plane
$\re(z)>0$, thus it has a discrete set of zeros, and for any given complex root of $\psi'(z)$ it is very unlikely that $\psi(\zeta)$ will be a real positive number, thus it is very unlikely that there exists $q>0$ such that $\psi(z)-q$ has a multiple root. Even if such a value of $q$ exists,
we see that in the worst possible case, there can be only finitely many such values on any compact subset of $\r$, this shows that the assumption that all the zeros $\{\zeta_n\}_{n\ge 1}$ are simple should not be very restrictive for practical purposes. 
However, of course this fact would require a rigorous proof in future work. 

Next, assuming that $0$ is regular for $(0,\infty)$, or equivalently, that the distribution of $S_{\ee(q)}$ has no atom at zero, 
we must have $\phiqp(\i z)=\e[\exp(-z S_{\ee(q)})]\to 0$ as $\re(z)\to +\infty$, thus 
it is reasonable to expect that the partial fraction decomposition for $\phiqp(\i z)$ should be of the form 
\beq\label{phiqp_partial_fractions}
  \phiqp(\i z)=\e\left[e^{-z S_{\ee(q)}}\right]= e^{\frac{ k z}2 } \left( 1+\frac{z}{\zeta_0}\right)^{-1}
   \prod\limits_{n\ge 1} \left(1+\frac{ z}{\zeta_n}\right)^{-1}\left(1+\frac{z}{\bar \zeta_n}\right)^{-1}=
   \frac{a_0}{z+\zeta_0}+ \sum\limits_{n\ge 1} \left[ \frac{a_n}{z+\zeta_n}+\frac{\bar a_n}{z+\bar \zeta_n} \right] 
\eeq
where $a_n={\textnormal{Res}}(\phiqp(\i z): \; z=-\zeta_n)$ for $n\ge 0$. 
Using the above infinite product representation we can easily compute the residues at points $\zeta_n$ and obtain   
\beqq
a_0=\zeta_0e^{-\frac12 k \zeta_0} \prod\limits_{m\ge 1} \bigg | 1-\frac{\zeta_0}{\zeta_m} \bigg |^{-2}, 
\eeqq
and
\beqq
a_n=\frac{\zeta_0 |\zeta_n|^2 e^{-\frac12 k \zeta_n}}{2\im(\zeta_n)(\zeta_n-\zeta_0)} 
 \prod\limits_{\substack{m\ge 1 \\ m\ne n}} \left[ \left(1-\frac{\zeta_n}{\zeta_m}\right) \left(1-\frac{\zeta_n}{\bar \zeta_m}\right) \right]^{-1}. 
\eeqq
Therefore, provided that there exists a partial fraction decomposition of the form (\ref{phiqp_partial_fractions}), we can use the uniqueness of Laplace transform and conclude that the density of the supremum $\xsup_{\ee(q)}$  should be given by the following infinite series 
 \beq\label{distribution_S_ee(q)}
 \frac{\d }{\d x} \p(\xsup_{\ee(q)}\le x) =  a_0 e^{- \zeta_0 x} + 
 2\sum\limits_{n\ge 1} \re\left[a_n e^{- \zeta_n x} \right].
 \eeq
Again, we emphasize that at this point the existence of a partial fraction decomposition (\ref{phiqp_partial_fractions}) is just a conjecture, which would have to be proven rigorously in future work.

\section{Scale functions for spectrally negative processes with bounded jumps}\label{sec_scale_functions}

Everywhere in this section we will assume that $Y$ is a spectrally negative L\'evy process with bounded jumps, 
so that the L\'evy measure $\Pi_Y(\d x)$ is 
supported on the interval $[-k,0)$ where $k>0$, and $k$ is the smallest such number. From the L\'evy-Khintchine formula (\ref{Levy_Khinthine2}) it follows that in this case 
the Laplace exponent $\psi_Y(z)=\ln \e [\exp(z Y_1)]$ is given by
\beqq
\psi_Y(z)=\frac12 \sigma^2 z^2 +\mu z + \int\limits_{-k}^{0} \left( e^{z x}-1- zx h(x) \right) \Pi_Y(\d x), 
\eeqq
and we see that $\psi_Y(z)$ is an entire function which is convex for real values of $z$. Since $\psi_Y(0)=0$, it is clear 
that for each $q>0$ the equation $\psi_Y(z)=q$ has a unique positive solution $z=\Phi(q)$, and in fact it is known from the general theory of 
spectrally negative processes that 
this is a unique solution in the half-plane $\re(z)>0$, see chapter 8 in \cite{Kyprianou}.

For $q>0$ the scale function $W^{(q)}(x)$ is  defined as follows: $W^{(q)}(x)=0$ for $x<0$ and on $[0,\infty)$ it is characterized 
via the Laplace transform identity
\beq\label{def_W^q}
\int\limits_{0}^{\infty} e^{-zx} W^{(q)}(x) \d x = \frac{1}{\psi_Y(z)-q}, \;\;\; \re(z) > \Phi(q).
\eeq 
The scale function can be considered as the main building block for the vast majority of fluctuation identities for spectrally negative processes, see 
\cite{Kyprianou,KuKyRi} for many examples of such identities. Here we will present one fundamental identity, which
is related to the exit of the process $Y$ from an interval. If we define the first passage time $\tau_a^+=\inf\{t>0: \; Y_t>a\}$ and similarly
$\tau_0^-=\inf\{t>0: \; Y_t<0\}$, then Theorem 8.1 in \cite{Kyprianou} tells us that
\beqq
\e_x\left[e^{-q\tau_a^+} {\mathbf 1}_{\{\tau_a^+<\tau_0^-\}}\right]=\frac{W^{(q)}(x)}{W^{(q)}(a)}, \;\;\; x\le a, \; q\ge 0. 
\eeqq
In fact, this identity justifies the name ``scale function'': we see that $W^{(q)}(x)$ plays an analogous role to scale function for diffusions. 

Our main goal in this section is to obtain an expression for the scale function $W^{(q)}(x)$ in terms of the Laplace exponent and the roots 
of the entire function $\psi_Y(z)-q$. We will consider spectrally negative L\'evy processes, whose L\'evy measure satisfies the following definition. 

\begin{definition}\label{def_regular_spec_negative}
 We say that the L\'evy measure of a spectrally negative process $Y$ is {\it regular} if 
there exists $n \in {\mathbb N}\cup\{0\}$ such that 
\begin{itemize}
 \item[(a)] for some constants $C, \alpha$ and $\{C_j,\alpha_j\}_{1\le j \le m}$  we have
\beqq
\pibar^-(x)-C|x|^{-\alpha}-\sum\limits_{j=1}^{m} C_j |x|^{-\alpha_j} \in {\mathcal {PC}}^{n+1}[-k,0], 
\eeqq
where  $\alpha, \alpha_j \in (-\infty,1) \cup (1,2)$ and $\alpha_j<\alpha$;
 \item[(b)] $\pibar^-{}^{(n)}(-k^+) \ne 0$;
 \item[(c)] $\pibar^-(x)\in {\mathcal C}^{n-1}(\r^-)$ (this condition is not needed for $n=0$).
\end{itemize}
\end{definition}

By considering the dual process $\hat Y=-Y$ and using Proposition \ref{prop_asymptotic_psi}  we see that if $Y$ has a regular L\'evy measure then 
its Laplace exponent satisfies 
\beq\label{psi_Y_asymptotics}
\psi_Y(-z)=Ae^{kz}z^{-a}+Bz^b+ o\left(e^{kz}z^{-a}\right) + o \left(z^b\right)
\eeq
as $z\to \infty$ in the domain ${\mathcal Q}_1$, where $a\ge 0$ and $b>0$.  
Moreover, the parameters in the asymptotic expression (\ref{psi_Y_asymptotics}) are given by
\beq\label{eqn_AaBb}
 \begin{cases}
& A=\bar \Pi^- {}^{(n)}(-k^+), \; \textnormal{ and } \; a=n,  \\
 &  B=\frac{\sigma^2}2, \; \textnormal{ and } \; b=2, \; \textnormal{ if } \sigma>0, \\
 &  B=-\mu, \; \textnormal{ and } \; b=1, \; \textnormal{ if } \sigma=0 \textnormal{ and } \alpha<1, \\
 &  B=-C e^{-\pi \i \alpha} \Gamma(1-\alpha), \; \textnormal{ and } \; b=\alpha, \; \textnormal{ if } \sigma=0 \textnormal{ and } \alpha>1.
\end{cases}
 \eeq

Theorems \ref{thm_main} and \ref{thm_asymptotics} tell us that equation $\psi_Y(-z)=q$ has infinitely many solutions $\{\zeta_n\}_{n\ge 0}$ in ${\mathcal Q}_1$,  moreover
the first solution $\zeta_0$ is real and positive, we have $\zeta_n \ge \zeta_0$ for $n\ge 1$ and the large solutions $\zeta_n$ 
satisfy asymptotic relation (\ref{zeta_asymptotics}) with constants $A$, $a$, $B$ and $b$ as given in (\ref{eqn_AaBb}). 

The next Theorem is our main result in this section. It provides a series representation for the scale function $W^{(q)}(x)$ in terms of the Laplace exponent $\psi_Y(z)$ and the numbers $\zeta_n$. Its proof can be found in Section \ref{sec_proofs}.

\begin{theorem}\label{thm_w^q}
Assume that $q>0$ or $q=0$ and $\Phi(q)>0$. If the L\'evy measure of $Y$ is regular and all solutions to $\psi_Y(z)=q$ are simple, then for $x>0$ we have
 \beq\label{eqn_W^q}
 W^{(q)}(x)=\frac{e^{\Phi(q)x}}{\psi_Y'(\Phi(q))}+\frac{e^{- \zeta_0 x}}{\psi_Y'(-\zeta_0)}+
 2\sum\limits_{n\ge 1} \re \left[ \frac{e^{- \zeta_n x}}{\psi_Y'(-\zeta_n)} \right],
 \eeq
where the series converges uniformly on $[\epsilon,\infty)$ for every $\epsilon>0$. 
\end{theorem}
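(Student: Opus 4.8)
\noindent\emph{Proof proposal.} The plan is to recover $W^{(q)}(x)$ by inverting the Laplace transform in \eqref{def_W^q}. Since $\psi_Y$ is entire, the function $z\mapsto(\psi_Y(z)-q)^{-1}$ is meromorphic, its poles are exactly the zeros of $\psi_Y(z)-q$, they are all simple by hypothesis, and the residue of $(\psi_Y(z)-q)^{-1}$ at a simple zero $\rho$ equals $1/\psi_Y'(\rho)$. Hence one expects $W^{(q)}(x)$ to equal the sum of the residues of $e^{zx}(\psi_Y(z)-q)^{-1}$ over all poles, and this sum is precisely the right-hand side of \eqref{eqn_W^q} once the location of the zeros is known. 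The argument therefore has three parts: (a) locate all zeros of $\psi_Y(z)-q$; (b) justify the residue expansion by a contour argument; (c) prove the claimed convergence, which also makes the limiting procedure in (b) legitimate. I will describe the case $q>0$; the case $q=0$, $\Phi(q)>0$ is obtained by letting $q\downarrow 0$ (equivalently, handled the same way with $\zeta_0=0$, so that the term $e^{-\zeta_0 x}/\psi_Y'(-\zeta_0)$ becomes the constant $1/\psi_Y'(0)$).

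For part (a), I would pass to the dual process $\hat Y=-Y$, which has bounded positive jumps with Laplace exponent $\psi_{\hat Y}(z)=\psi_Y(-z)$, so Theorem \ref{thm_main} applies to it: its positive Wiener--Hopf factor is the infinite product in \eqref{wh_factor} formed from the solutions $\{\zeta_n\}_{n\ge 0}$ of $\psi_Y(-z)=q$ in $\overline{{\mathcal Q}_1}$. Since $Y$ is spectrally negative, $S_{\ee(q)}$ is exponentially distributed with parameter $\Phi(q)$, so $I^{\hat Y}_{\ee(q)}=-S_{\ee(q)}$ contributes the rational factor $\Phi(q)/(\Phi(q)+z)$ to the Wiener--Hopf factorisation of $\hat Y$. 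Multiplying the two factors through the identity in \eqref{wh_factor} and then replacing $z$ by $-z$ yields
\[
\psi_Y(z)-q=\frac{q}{\Phi(q)}\,(z-\Phi(q))\,e^{-kz/2}\Big(1+\frac{z}{\zeta_0}\Big)\prod_{n\ge 1}\Big(1+\frac{z}{\zeta_n}\Big)\Big(1+\frac{z}{\bar\zeta_n}\Big),
\]
which displays every zero of $\psi_Y(z)-q$: apart from $\Phi(q)$ these are $-\zeta_0$ and the conjugate pairs $-\zeta_n,-\bar\zeta_n$. A direct estimate $\re\,\psi_Y(\i t)\le 0<q$ rules out zeros on the imaginary axis, so no pole is overlooked.

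For part (b), I would use the Bromwich inversion $W^{(q)}(x)=\frac{1}{2\pi\i}\int_{c-\i\infty}^{c+\i\infty}e^{zx}(\psi_Y(z)-q)^{-1}\,\d z$ with $c>\Phi(q)$ (legitimate because $W^{(q)}$ is continuous on $(0,\infty)$ and of exponential order $\Phi(q)$), and shift the contour to the left along a sequence of rectangular contours $\Gamma_N$ with vertical sides $\re z=c$ and $\re z=-d_N$ and horizontal sides at heights $\pm(2\pi/k)(N+\tfrac12)$, collecting the residues from part (a). By the asymptotics \eqref{zeta_asymptotics} the zeros $-\zeta_n,-\bar\zeta_n$ accumulate near the imaginary axis with imaginary parts spaced asymptotically by $2\pi/k$, so the horizontal sides of $\Gamma_N$ pass between consecutive zeros, and one can choose $d_N\to\infty$ slowly (say $d_N=N^{1/4}$) so that all the zeros caught by $\Gamma_N$ lie in its interior. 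The hard part will be to bound the integrals of $e^{zx}(\psi_Y(z)-q)^{-1}$ over the top, bottom, and left sides of $\Gamma_N$ and to show they tend to zero: on the horizontal sides near the imaginary axis the term $Bz^b$ in \eqref{psi_Y_asymptotics} dominates and $|\psi_Y(z)-q|\asymp|z|^b$; on the far-left side $e^{zx}$ decays like $e^{-d_N x}$ while the term $Ae^{kz}z^{-a}$ (equivalently $\psi_{\hat Y}(-z)$ deep in the right half-plane) makes $|\psi_Y(z)-q|$ large; and in the intermediate part of the horizontal sides, where the zeros of $\psi_Y-q$ cluster, one needs a lower bound for the Cartwright-class product from part (a) along contours passing between its zeros. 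This is exactly where the Cartwright-class analysis used in the proof of Theorem \ref{thm_main}, together with the regularity of growth encoded in \eqref{psi_Y_asymptotics}, is needed. Grouping the residue at $-\zeta_n$ with the one at $-\bar\zeta_n$ and using $\overline{\psi_Y(z)}=\psi_Y(\bar z)$ turns their sum into $2\,\re\big[e^{-\zeta_n x}/\psi_Y'(-\zeta_n)\big]$, which produces \eqref{eqn_W^q}.

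For part (c), differentiating \eqref{psi_Y_asymptotics} as permitted by Proposition \ref{prop_asymptotic_psi} and using the leading-order balance $Ae^{k\zeta_n}\zeta_n^{-a}\sim -B\zeta_n^{\,b}$ that holds at the roots, one gets $|\psi_Y'(-\zeta_n)|\asymp|\zeta_n|^{\,b}$, while \eqref{zeta_asymptotics} gives $|\zeta_n|\sim 2\pi n/k$ and $\re(\zeta_n)=\tfrac{a+b}{k}\ln n+O(1)$. Hence the $n$-th term of the series in \eqref{eqn_W^q} has modulus $O\big(n^{-b}e^{-\re(\zeta_n)x}\big)=O\big(n^{-b-(a+b)x/k}\big)$ uniformly for $x\ge\epsilon$, which is summable for every $\epsilon>0$ since $a\ge 0$ and $b\ge 1$ in all the cases listed in \eqref{eqn_AaBb}; this yields the uniform convergence on $[\epsilon,\infty)$ and also shows that the finite sums of residues produced in part (b) converge to the series in \eqref{eqn_W^q} as $N\to\infty$. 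I expect part (b) --- the contour estimates near the accumulating zeros, and the verification that the auxiliary integrals vanish --- to be the main obstacle; parts (a) and (c) are comparatively routine given Theorems \ref{thm_main} and \ref{thm_asymptotics} and Proposition \ref{prop_asymptotic_psi}.
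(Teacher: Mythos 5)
Your outline follows the same route as the paper's proof: Bromwich inversion of \eqref{def_W^q} (valid here since, by regularity of the L\'evy measure, $W^{(q)}$ is well behaved away from finitely many points), a leftward shift of the contour across rectangles whose horizontal sides pass between consecutive roots, collection of the residues $e^{zx}/\psi_Y'(z)$ at $\Phi(q)$, $-\zeta_0$ and the conjugate pairs $-\zeta_n,-\bar\zeta_n$, and the convergence estimate obtained from \eqref{zeta_asymptotics} and the differentiated form of \eqref{psi_Y_asymptotics}, giving terms of size $O\bigl(n^{-b-(a+b)x/k}\bigr)$ with $b\ge 1$. Parts (a) and (c) are essentially correct (the explicit product factorization in (a) is harmless but not needed: Theorem \ref{thm_main} applied to $-Y$, the absence of zeros in the strip $0\le\re(z)<\zeta_0$, and the uniqueness of $\Phi(q)$ already account for every pole).

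The genuine gap is precisely the step you flag and postpone in part (b): a lower bound for $|\psi_Y(z)-q|$ on the horizontal sides in the transition zone where the two terms of \eqref{psi_Y_asymptotics} are of comparable modulus --- this is exactly where the roots sit, at real part $\approx -(a+b)k^{-1}\ln n$. Appealing to ``a lower bound for the Cartwright-class product along contours passing between its zeros'' is not the tool that works (such bounds are delicate and typically hold only off exceptional sets, and nothing in the proof of Theorem \ref{thm_main} provides them). What closes the gap is the phase-alignment argument already built for Theorem \ref{thm_asymptotics}: if the horizontal side is placed at ordinate $\pm(\im(z_n)+\pi/k)$, then in the reflected variable $\arg\bigl(\tfrac{A}{B}e^{kz-\frac{\pi\i}{2}(a+b)}\bigr)=0$ along it (cf.\ \eqref{contour_L2_1}), so by \eqref{fact1}--\eqref{fact2} the exponential and power terms reinforce rather than cancel, giving $|\psi_Y(z)|\ge\cos(C_1\epsilon)\bigl(|Ae^{kz}z^{-a}|+|Bz^{b}|\bigr)\gtrsim \im(z_n)$ on the portion of the side within a small angle of the imaginary axis, while on the remaining portion, where $|\arg(z)|>\pi/2+\epsilon$ so that $\re(z)<-|z|\sin\epsilon$, the exponential term dominates outright and yields an even larger bound; together these make the horizontal integrals $O\bigl(\im(z_n)^{-1}\bigr)$. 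Two consequences you should build in: the heights cannot be taken as $\pm(2\pi/k)(N+\tfrac12)$, because the offset $\arg(B/A)+\bigl(\tfrac12(a+b)+1\bigr)\pi$ in \eqref{zeta_asymptotics} matters for the alignment --- they must be tied to $\im(z_n)$ as above; and the double limit is cleaner in two steps (first send the left side $\re(z)=m\to-\infty$ at fixed $n$, where $\psi_Y$ grows exponentially, uniformly in the fixed horizontal strip, then let $n\to\infty$), which removes the need for the $d_N=N^{1/4}$ bookkeeping. With that estimate supplied, the rest of your outline goes through and coincides with the paper's argument.
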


Formula \eqref{eqn_W^q} is in fact very similar to the corresponding expression for the scale function for meromorphic processes, see 
\cite{KuMo} and \cite{KuKyRi}. 

In the very unlikely case that some of the solutions to $\psi_Y(z)=q$ are not simple (see the discussion on page \pageref{discussion_simple_zeros}) 
the expression in the right-hand side of (\ref{eqn_W^q}) would have to be modified. The coefficients $\exp(- \zeta_n x)/\psi_Y'(-\zeta_n)$ are the residues of $\exp(zx)/(\psi_Y(z)-q)$ at $z=-\zeta_n$ (see the proof of Theorem \ref{thm_w^q} in Section \ref{sec_proofs}), 
and these coefficients would have to be appropriately modified if $\zeta_n$ is a root of $\psi_Y(-z)-q$ of multiplicity greater than one.

\section{Numerical examples}\label{sec_numerics}

The main reason why we are interested in the analytical structure of the Wiener-Hopf factorization is that its understanding can lead to efficient numerical algorithms for computing such important objects as the distribution of supremum $S_{\ee(q)}$ or infimum $I_{\ee(q)}$, or the scale function $W^{(q)}(x)$. These are not easy problems for general L\'evy processes. For example, computing the distribution of $S_{\ee(q)}$ in general involves evaluating numerically two integral transforms: first one has to compute the positive Wiener-Hopf factor via the formula (see \cite{Mordecki})
\beqq
  \e\left[ e^{\i z S_{\ee(q)}} \right]=\exp\bigg[\frac{z}{2\pi {\rm i}} \int_{\r}  \ln\bigg(\frac{q}{q-\psi(\i u)}\bigg) \frac{{\rm d} u}{u(u-z)} \bigg], \qquad \im(z)>0,
 \eeqq
and then perform an inverse Fourier transform to recover the distribution of $S_{\ee(q)}$. Similarly, computing the scale function $W^{(q)}(x)$ in general is equivalent to inverting Laplace transform in \eqref{def_W^q} (see \cite{KuKyRi} for the detailed discussion and comparison of several numerical algorithms for computing the scale function).

The results presented in this paper lead to quite different approach for computing the distribution of $S_{\ee(q)}$ and the scale function $W^{(q)}(x)$. This approach does not rely on the numerical evaluation of multiple integral transforms; instead, the main ingredients are the solutions
to the equation $\psi(z)=q$ and infinite series representations \eqref{distribution_S_ee(q)} and \eqref{eqn_W^q}. Therefore, 
this approach is very close in spirit to the techniques that are used for
processes with positive jumps of rational transform \cite{Mordecki} or meromorphic \cite{KuzKypJC} processes.

Our main goal in this section is to give a brief  description of the numerical algorithms and techniques which 
are suitable for L\'evy processes with bounded positive jumps, in particular, we would like to show that series expansions
\eqref{distribution_S_ee(q)} and \eqref{eqn_W^q} may lead to efficient numerical computations. 
However, the detailed investigation of these numerical algorithms is beyond the scope of the current paper and 
we will leave to future work such important questions as the speed of convergence, rigorous error analysis, etc.

\subsection{Preliminaries}\label{subsec_preliminaries}

In order to implement the numerical algorithms based on formulas \eqref{distribution_S_ee(q)} and \eqref{eqn_W^q} we have to solve the 
following problem: how can we find the complex solutions of the equation $\psi(z)=q$? As we will see, 
it is not an easy problem, yet it can be solved rather efficiently provided that we use the right techniques.

The main problem in finding the zeros of $\psi(z)-q$ is that all of them (except for $\zeta_0$) are complex numbers. Note that the real zero $\zeta_0$ can be easily found by bisection method followed by Newton's method. 
The large zeros of $\psi(z)=q$ satisfy asymptotic relation (\ref{zeta_asymptotics}), thus they can be found by Newton's method which is started from the value in the right-hand side of (\ref{zeta_asymptotics}). The only problem that remains is how to compute the complex zeros of $\psi(z)-q$ which are not too large. 

The problem of computing the zeros of an analytic function inside a bounded domain has been investigated by many authors, see \cite{Dellnitz2002325,Yakoubsohn,Ying_Katz} and the references therein. 
We will follow the method presented in \cite{Dellnitz2002325}, which is based on Cauchy's Argument Principle. 
Let us recall this important result. We denote the change in the argument of an analytic function $f(z)$ over a piecewise smooth curve $C$ as 
\beq\label{def_arg}
\Delta \arg(f,C)=\im \left[ \int_C \frac{f'(z)}{f(z)}\d z \right],
\eeq
provided that $f(z)\ne 0$ for $z\in C$. 
Cauchy's Argument Principle states that if $C$ is a simple closed contour which is oriented counter-clockwise, 
and $f(z)$ is analytic and non-zero on $C$ and analytic inside $C$, then $\Delta \arg(f,C)=2 \pi N$, 
where $N$ is the number of zeros of $f(z)$ inside contour $C$.

This result leads to a practical iterative procedure to determine the complex zeros of an analytic function $f(z)$ inside a closed contour.  We start with an initial rectangle $R$ in the complex plane and proceed through the following sequence of steps: (i) compute $N=N(R)$ - the number of zeros
of $f(z)$ inside rectangle $R$,
(ii) if $N=0$, then we stop, (iii) if $N=1$ - we try to find the zero using Newton's method started from a point inside the rectangle, (iv) if
$N>1$ or if the Newton's method in step (iii) fails - then we subdivide the rectangle $R$ into a finite number of disjoint rectangles $R_j$. For each of the smaller rectangles $R_j$ we proceed through the same sequence of steps (i)$\to$(ii)$\to$(iii)$\to$(iv). At some point 
the rectangles which contain zeros of $f(z)$ become very small, therefore the starting point of the Newton's method is close to the target and 
the Newton's method converges to the zero of $f(z)$. This shows that in theory we should be able to recover all zeros of $f(z)$ inside $R$. 
There are some technical issues which arise when at some step of the algorithm we obtain a rectangle with one of the zeros being extremely close to the boundary of this rectangle, however we found that in our numerical experiments this issue was not a big problem. 
More details of this algorithm and some numerical examples can be found in \cite{Dellnitz2002325}.

Next, let us review some basic facts about the incomplete gamma function, which will be used extensively in this section. The incomplete gamma function is defined for $\re(s)>0$ and $z>0$ as
\beq\label{def_gamma}
\gamma(s,z)=\int\limits_0^z u^{s-1} e^{-u} \d u.
\eeq
It is known (see section 8.35 in \cite{Jeffrey2007}) that the function $z \mapsto z^{-s}\gamma(s,z)$ is an entire function and it can be represented by 
a Taylor series (which converges everywhere in the complex plane) as follows
\beq\label{eqn_inc_gamma}
z^{-s} \gamma(s,z)=s^{-1}  {}_1F_1(s,s+1;-z)=\sum\limits_{n\ge 0} \frac{(-1)^n}{n!} \frac{z^n}{s+n}=e^{-z} \sum\limits_{n\ge 0}  \frac{z^n}{(s)_{n+1}}.
\eeq
Here, as usual, $(a)_n=a(a+1)\dots(a+n-1)$ denotes the Pochhammer symbol and the function ${}_1F_1(a,b;z)$ which appears in the above equation is the confluent hypergeometric function 
\beq\label{def_1F1}
{}_1 F_1(a,b;z)=\sum\limits_{n\ge 0} \frac{(a)_n}{(b)_n} \frac{z^n}{n!},
\eeq
see chapter 6 in \cite{Erdelyi1955V3} for an extensive collection of results and formulas related to this function.

\begin{figure}
\centering
\subfloat[][]{\label{fig_proof_a}\includegraphics[height =5cm]{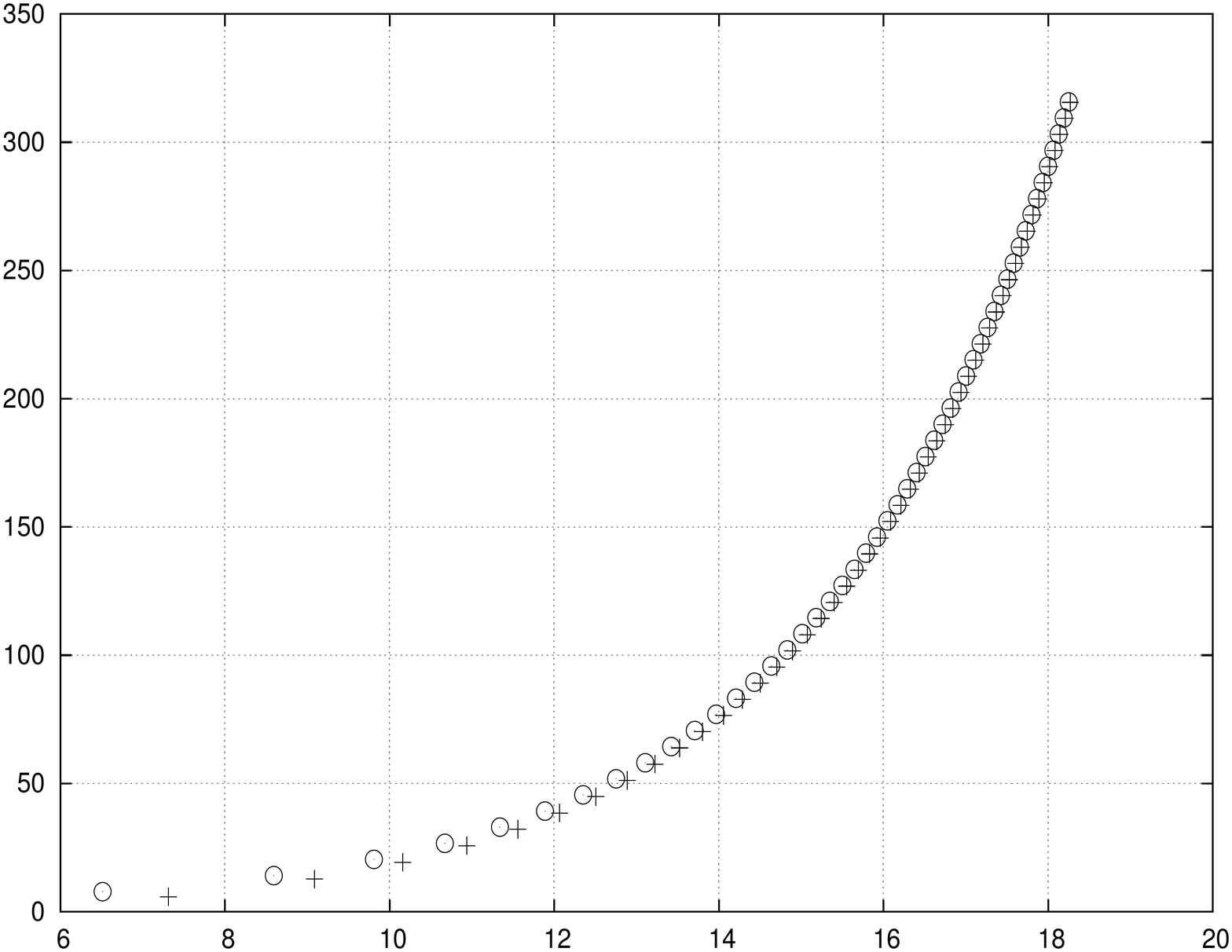}} 
\subfloat[][]{\label{fig_proof_b}\includegraphics[height =5cm]{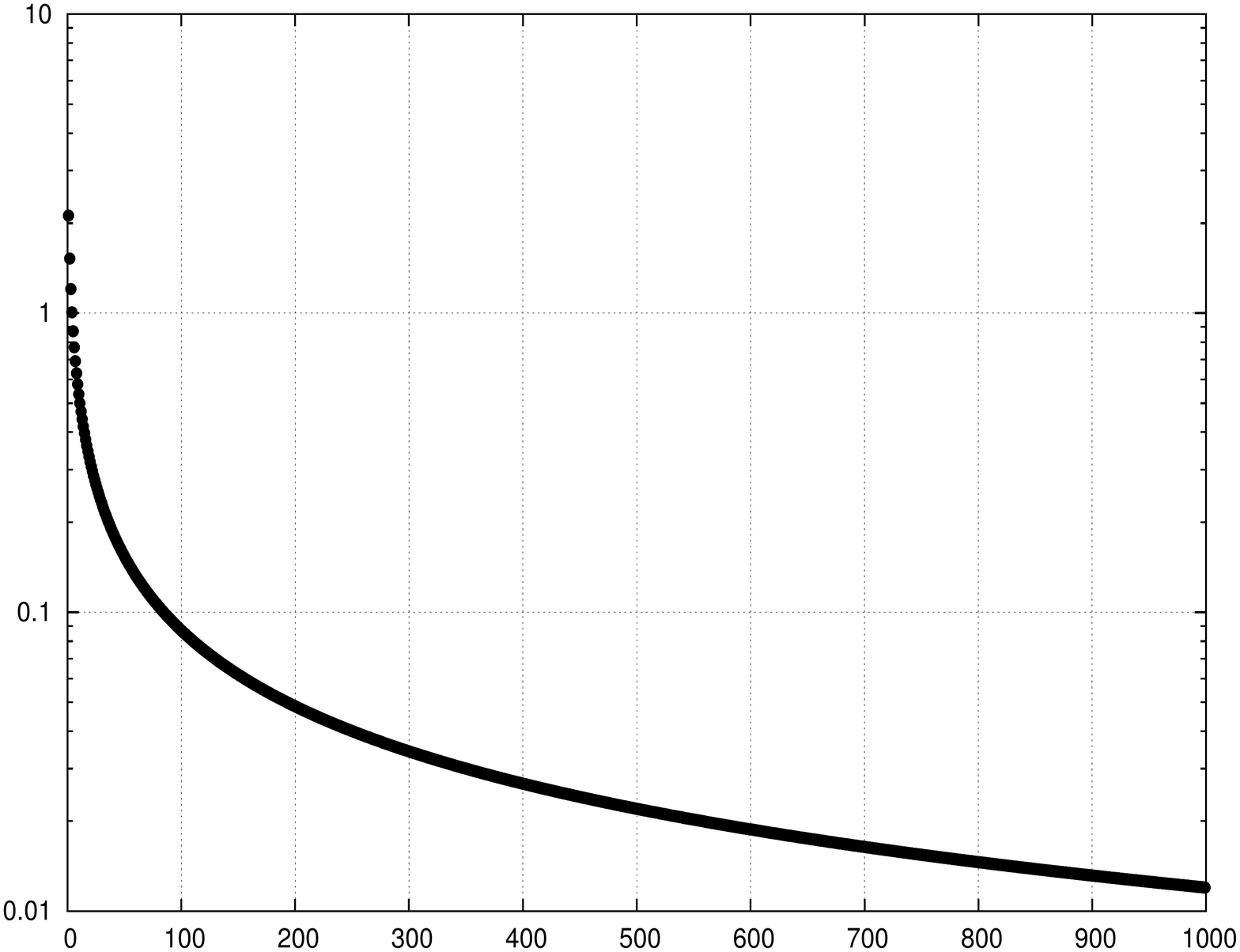}} 
\caption{On graph (a) we present the first fifty roots $\zeta_n$ (crosses) and their approximations (circles) given by
Theorem \ref{thm_asymptotics}. On graph (b) we present the distance from $\zeta_n$ to its approximation (note that we use logarithmic scale for the y-axis).} 
\label{fig_roots}
\end{figure}

While the incomplete gamma function is not one of the elementary functions, it can still 
be easily evaluated everywhere in the complex plane. In fact, numerical routines for evaluating this function are 
provided in such computational software programs 
as Maple and Mathematica. 
One should use different strategies for computing $\gamma(s,z)$ depending on whether $|s|$ and/or $|z|$ is large. However, we will only need to compute $\gamma(s,z)$ for a fixed $s$ (which is real and not large) and for various values of $z$. In this case the numerical 
algorithms are based on one of the infinite series expansions presented (\ref{eqn_inc_gamma}) when $|z|$ is not large,
 or on the various asymptotic approximations (and expansions in continued fractions, such as formula 
 8.358 in \cite{Jeffrey2007}) when $|z|$ is large. 
We refer to \cite{Jones1985401} or  \cite{Winitzki03} for all the details. 

Finally, we would like to mention that the code for all numerical experiments was written in C++ and the computations were performed on a standard laptop (Intel Core i5 2.6 GHz processor and 4 GB of RAM).

\subsection{Numerical example 1: processes with double-sided jumps}\label{subsec_numerics_example1}

For our first numerical experiment, we consider a generalized tempered stable process (see \cite{Cont}), also known as KoBoL process, with the L\'evy measure truncated at a positive number $k$
\beq\label{def_pi_temp_stable}
\Pi(\d x)={\mathbf 1}_{\{x<0\}} \hat C \hat \alpha e^{\hat \beta x} |x|^{-1-\hat \alpha} \d x+
 {\mathbf 1}_{\{0<x<k\}}  C  \alpha e^{- \beta x} x^{-1- \alpha} \d x.
\eeq

\begin{proposition}\label{Laplace_exponent_X}
Let $X$ be a L\'evy process, with the L\'evy measure $\Pi(\d x)$ given by (\ref{def_pi_temp_stable}). Then the Laplace exponent of $X$
is given by
\beq\label{def_psi_X}
\psi(z)=\frac12 \sigma^2 z^2 +\mu z - \hat C \Gamma(1-\hat\alpha) (\hat \beta+z)^{\hat \alpha}+ C \alpha (\beta-z)^{\alpha} \gamma(-\alpha,k(\beta-z))+\eta,
\eeq
where $\eta$ is chosen so that $\psi(0)=0$. 
\end{proposition}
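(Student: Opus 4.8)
The plan is to substitute the explicit L\'evy measure \eqref{def_pi_temp_stable} into the L\'evy-Khintchine formula \eqref{Levy_Khinthine2} and evaluate the resulting integral in closed form, splitting it into the contribution from the negative half-line and the contribution from $(0,k)$. First I would handle the negative part: the integral $\int_{-\infty}^0 (e^{zx}-1-zxh(x))\hat C \hat\alpha e^{\hat\beta x}|x|^{-1-\hat\alpha}\d x$ is exactly (up to the compensator terms, which only affect the constant $\eta$ and the linear coefficient) the standard tempered-stable integral, and it evaluates to $-\hat C\Gamma(1-\hat\alpha)(\hat\beta+z)^{\hat\alpha}$ plus constant and linear terms; this can be done either by differentiating under the integral sign, by recognizing it as the Laplace exponent of a (one-sided) tempered stable process, or by direct use of $\int_0^\infty (e^{-u s}-1+us)u^{-1-\alpha}\d u = \Gamma(-\alpha)s^\alpha$ for $\alpha\in(1,2)$ together with the analogous identity for $\alpha<1$. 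The key point is that for $\re(z)>0$ (in fact on the whole half-plane) the integrand decays fast enough at $-\infty$ and the computation matches the claimed term.

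Next I would handle the positive part: $\int_0^k (e^{zx}-1-zxh(x)) C\alpha e^{-\beta x}x^{-1-\alpha}\d x$. Here the change of variables $u=(\beta-z)x$ (valid as an analytic substitution since $\re(\beta-z)$ need not be positive, but the integral is over a finite interval so there is no convergence issue) turns $\int_0^k e^{(z-\beta)x}x^{-1-\alpha}\d x$ into $(\beta-z)^{\alpha}\int_0^{k(\beta-z)} u^{-1-\alpha}e^{-u}\d u$, which is $(\beta-z)^{\alpha}\gamma(-\alpha,k(\beta-z))$ in the notation \eqref{def_gamma} — here one should note that although \eqref{def_gamma} is stated for $\re(s)>0$, the function $z\mapsto z^{-s}\gamma(s,z)$ extends to an entire function by \eqref{eqn_inc_gamma}, so $(\beta-z)^{\alpha}\gamma(-\alpha,k(\beta-z))$ is well-defined and analytic for all $z$ with $\beta-z$ avoiding the branch cut, i.e. for $\re(z)<\beta$, and in particular on a neighbourhood of the relevant half-plane. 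Multiplying by $C\alpha$ gives the claimed term $C\alpha(\beta-z)^{\alpha}\gamma(-\alpha,k(\beta-z))$; the remaining pieces $-1-zxh(x)$ integrate to a constant plus a multiple of $z$.

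Finally I would collect all the constant and linear-in-$z$ terms arising from the $-1$, the $-zxh(x)$ compensator, and any constants generated by the closed-form evaluations, and absorb them: the linear terms combine into $\mu z$ (recall $\mu$ depends on the choice of cutoff function $h$, so this is just a redefinition of the drift) and the constants combine into the single constant $\eta$, which is then pinned down by the normalization $\psi(0)=0$. This last step makes the statement clean: rather than tracking $\eta$ explicitly, one simply observes that the right-hand side of \eqref{def_psi_X} is analytic in $z$ on $\re(z)<\beta$, that it differs from $\psi(z)$ by at most an additive constant once $\mu$ is chosen correctly, and that both sides vanish at $z=0$, forcing the constant to be zero.

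I expect the only mildly delicate step to be the bookkeeping of the compensator term $zxh(x)$ and its interaction with the two cases $h\equiv 0$ (bounded variation) versus $h={\mathbf 1}_{\{x>-1\}}$ (unbounded variation), since which case applies depends on whether $\hat\alpha<1$ or $\hat\alpha\in(1,2)$ (the positive part is always of bounded variation on $(0,k)$ for any $\alpha<2$ because the interval is finite — wait, not quite: near $0$ the density $x^{-1-\alpha}$ is non-integrable when $\alpha>0$, so the positive jumps also contribute to unbounded variation when $\alpha\in(1,2)$). In all cases, however, the compensator only contributes terms that are constant or linear in $z$, which get swept into $\mu z$ and $\eta$; so the structural form \eqref{def_psi_X} is unaffected, and the main content of the proof is the two closed-form integral evaluations described above. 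No genuine obstacle is anticipated — this is essentially a direct computation dressed up with the observation that the incomplete gamma function provides the right special function to express the truncated integral.
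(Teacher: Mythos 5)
Your overall strategy coincides with the paper's: substitute (\ref{def_pi_temp_stable}) into the L\'evy--Khintchine formula (\ref{Levy_Khinthine2}), evaluate the integral over $(-\infty,0)$ to produce $-\hat C\Gamma(1-\hat\alpha)(\hat\beta+z)^{\hat\alpha}$, evaluate the truncated integral over $(0,k)$ in terms of the incomplete gamma function, and sweep all constant and linear terms into $\eta$ and $\mu z$. The problem is that your central step for the positive jumps manipulates divergent integrals precisely in the interesting range $\alpha\in(0,1)\cup(1,2)$ (e.g.\ $\alpha=0.5$ in the numerical example). The integral $\int_0^k e^{(z-\beta)x}x^{-1-\alpha}\,\d x$ diverges at the endpoint $x=0$ whenever $\alpha>0$ --- the finiteness of the interval is irrelevant, the obstruction is the non-integrable singularity $x^{-1-\alpha}$ --- and the same is true of $\int_0^{k(\beta-z)}u^{-1-\alpha}e^{-u}\,\d u$, so the object you identify with $\gamma(-\alpha,k(\beta-z))$ does not exist as an integral for $\alpha>0$; invoking the entire extension (\ref{eqn_inc_gamma}) afterwards does not retroactively justify the change of variables. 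Likewise, the claim that the ``remaining pieces $-1-zxh(x)$ integrate to a constant plus a multiple of $z$'' fails for $\alpha>0$, since $\int_0^k x^{-1-\alpha}e^{-\beta x}\,\d x=\infty$ (and $\int_0^k x^{-\alpha}e^{-\beta x}\,\d x=\infty$ when $\alpha>1$); the same caveat applies on the negative half-line if you detach the $-1$ or the compensator from $e^{zx}$ when $\hat\alpha\in(0,2)$.

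The missing idea is to keep compensated combinations together before any closed-form evaluation, which is exactly how the paper proceeds: write $(e^{zx}-1)e^{-\beta x}=(e^{(z-\beta)x}-1)-(e^{-\beta x}-1)$, so that for $\alpha<1$ each piece converges absolutely, and evaluate $F(\alpha,k,w)=\alpha\int_0^k(e^{wx}-1)x^{-1-\alpha}\,\d x$ by term-by-term integration of the Taylor series of $e^{wx}-1$, which yields $F(\alpha,k,w)=k^{-\alpha}\left[1-{}_1F_1(-\alpha,1-\alpha;kw)\right]$ as in (\ref{eqn_F_alpha_k_z}); only then is (\ref{eqn_inc_gamma}) used to rewrite the ${}_1F_1$ term as $C\alpha(\beta-z)^{\alpha}\gamma(-\alpha,k(\beta-z))$, understood through that entire extension (which, incidentally, makes the expression entire in $z$, not merely defined for $\re(z)<\beta$ as you suggest). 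The analogous compensated splitting handles the negative half-line via (\ref{int_identity_1}), and when $\alpha$ or $\hat\alpha$ lies in $(1,2)$ one works with the cutoff $h\equiv 1$ and performs one extra integration by parts. Alternatively, you could salvage your computation by first proving (\ref{def_psi_X}) for $\alpha<0$ and $\hat\alpha<0$, where your manipulations are legitimate, and then extending to the full parameter range by analytic continuation in $\alpha$ and $\hat\alpha$; but some such argument must be supplied --- as written, the step would fail.
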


The proof of Proposition \ref{Laplace_exponent_X} can be found in Section \ref{sec_proofs}. Note that when $z< \beta$
and $k\to \infty$, 
then $\gamma(-\alpha,k(\beta-z)) \to \Gamma(-\alpha)$ (this follows from (\ref{def_gamma}) when $\alpha<0$, see also formulas (8.356.3) and 
(8.357.1) in \cite{Jeffrey2007}). This confirms the intuitively obvious result that as the cutoff  $k$ becomes very large, the Laplace exponent of the truncated process converges to the Laplace exponent of the generalized tempered stable process, see Proposition 4.2 in \cite{Cont}. 
Note also that the function
$ (\beta-z)^{\alpha} \gamma(-\alpha,k(\beta-z))$ which appears in (\ref{def_psi_X}) is an entire function of $z$, 
which confirms the fact that $\psi(z)$ is analytic in the half-plane $\re(z)>0$. 
As in the case of generalized tempered stable processes, when $\sigma>0$ or $\alpha>1$ or $\hat \alpha>1$ we have a process of infinite variation. Finally, in the case when $\sigma=0$ and $\alpha<1$ and $\hat \alpha<1$  we have a process of finite variation, and in this case $\mu$ corresponds to the linear drift of this process.

\begin{figure}
\centering
\includegraphics[height =6cm]{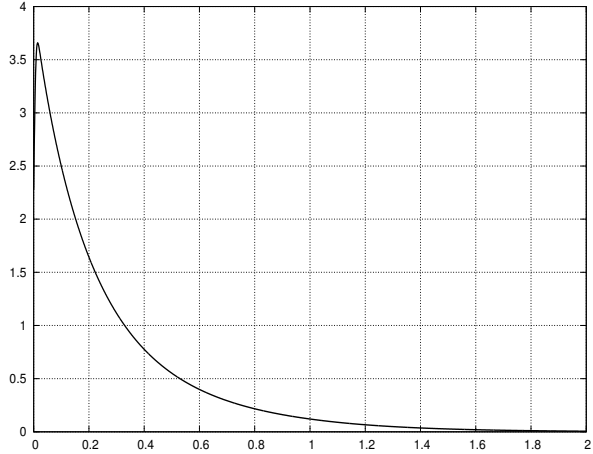} 
\caption{The density of $S_{\ee(q)}$ for $q=1$.} 
\label{fig_density_Sq}
\end{figure}

We consider the following parameters:
\beqq
\sigma=1, \;\; \mu=-2,  \; \; C=\hat C=1, \;\; \alpha=\hat \alpha=0.5, \;\; \beta=1, \; \; \hat \beta=2, \;\; k=1.
\eeqq
These parameters give us a process with negative linear drift and finite variation infinite activity jumps. 
Note that since $\sigma=1$ we have a process with paths of unbounded variation. We also 
set $q=1$. 

First we compute 1000 roots $\zeta_n$ using the method discussed in Section \ref{subsec_preliminaries}. 
Overall, it takes just 0.15 seconds to compute 1000 roots. The results are presented on Figure \ref{fig_roots}. Note that, as expected, the approximation to $\zeta_n$ provided by \eqref{zeta_asymptotics} becomes better and better as $n$ increases,
but is not so good for small values of $n$.

Next, we compute the density $p(x)$ of the supremum $S_{\ee(q)}$ using the series representation \eqref{distribution_S_ee(q)}. The results 
are presented on Figure \ref{fig_density_Sq}. After we have pre-computed and stored the roots $\zeta_n$, 
computing 2000 values of $p(x)$ takes just 0.26 seconds. In order to test the accuracy we have numerically computed the integral of $p(x)$ on the interval $[0,10]$ (which should be close to $\p(S_{\ee(q)}>0)=1$). The result is equal 0.985 if we use 1000 roots and 0.995 if we use 5000 roots.

\subsection{Numerical example 2: a family of spectrally negative processes}\label{subsec_numerics_example2}

For our second numerical experiment we consider a spectrally negative L\'evy process $Y$, with the L\'evy measure defined as follows
\beqq
\Pi_Y(\d x)={\mathbf 1}_{\{-k<x<0\}} C  \alpha e^{ \beta x} |x|^{-1- \alpha} \d x.
\eeqq
Note that the dual process $\hat Y=-Y$ belongs to the class of processes described in Proposition
\ref{Laplace_exponent_X}, therefore the Laplace exponent of $Y$ is given by  
\beq
\psi_Y(z)=\frac12 \sigma^2 z^2 +\mu z + C \alpha (\beta+z)^{\alpha} \gamma(-\alpha,k(\beta+z))+\eta,
\eeq
where again $\eta$ is chosen so that $\psi_Y(0)=0$.

\begin{figure}
\centering
\subfloat[][$\sigma=0$]{\label{fig_proof_a}\includegraphics[height =5cm]{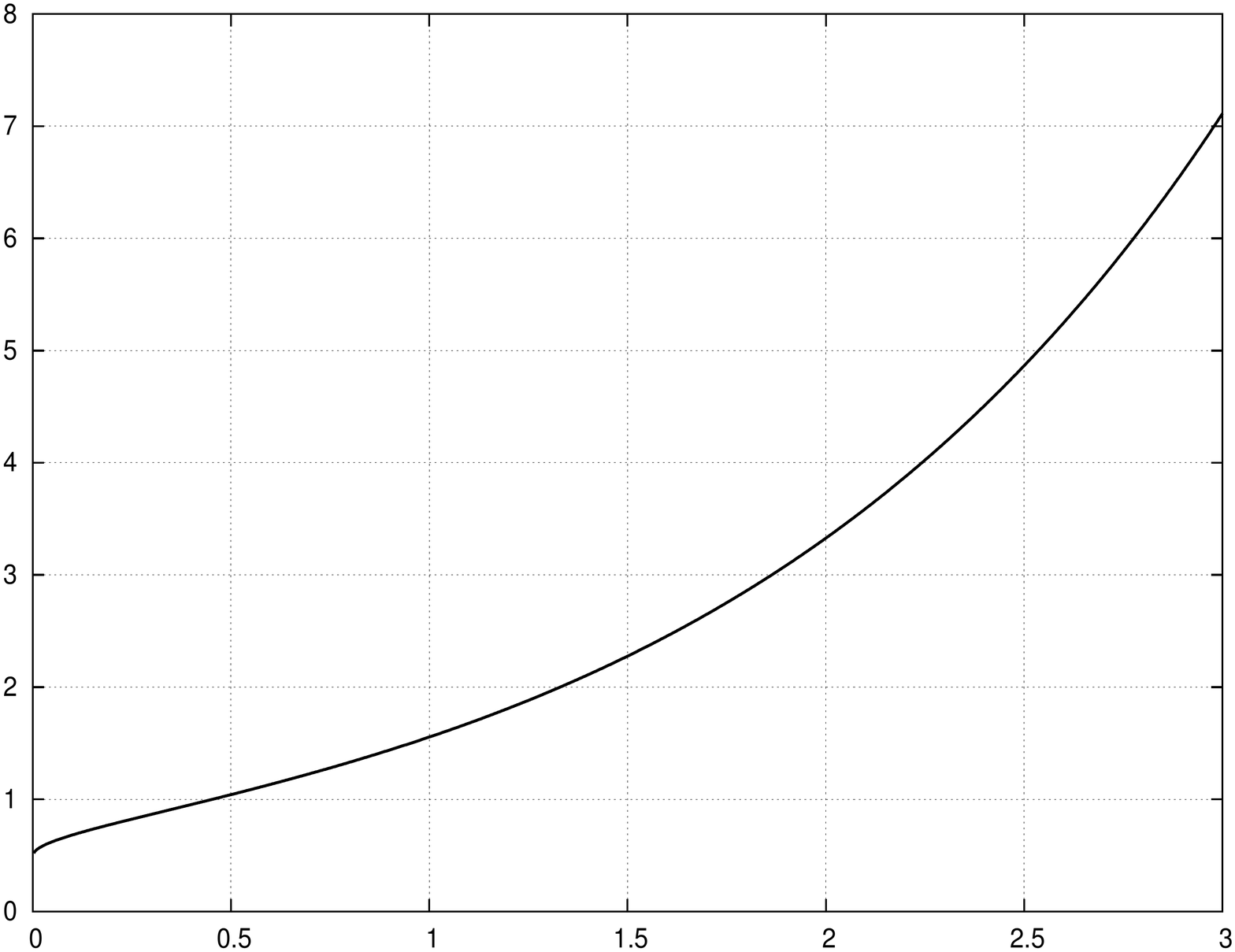}} 
\subfloat[][$\sigma=1$]{\label{fig_proof_b}\includegraphics[height =5cm]{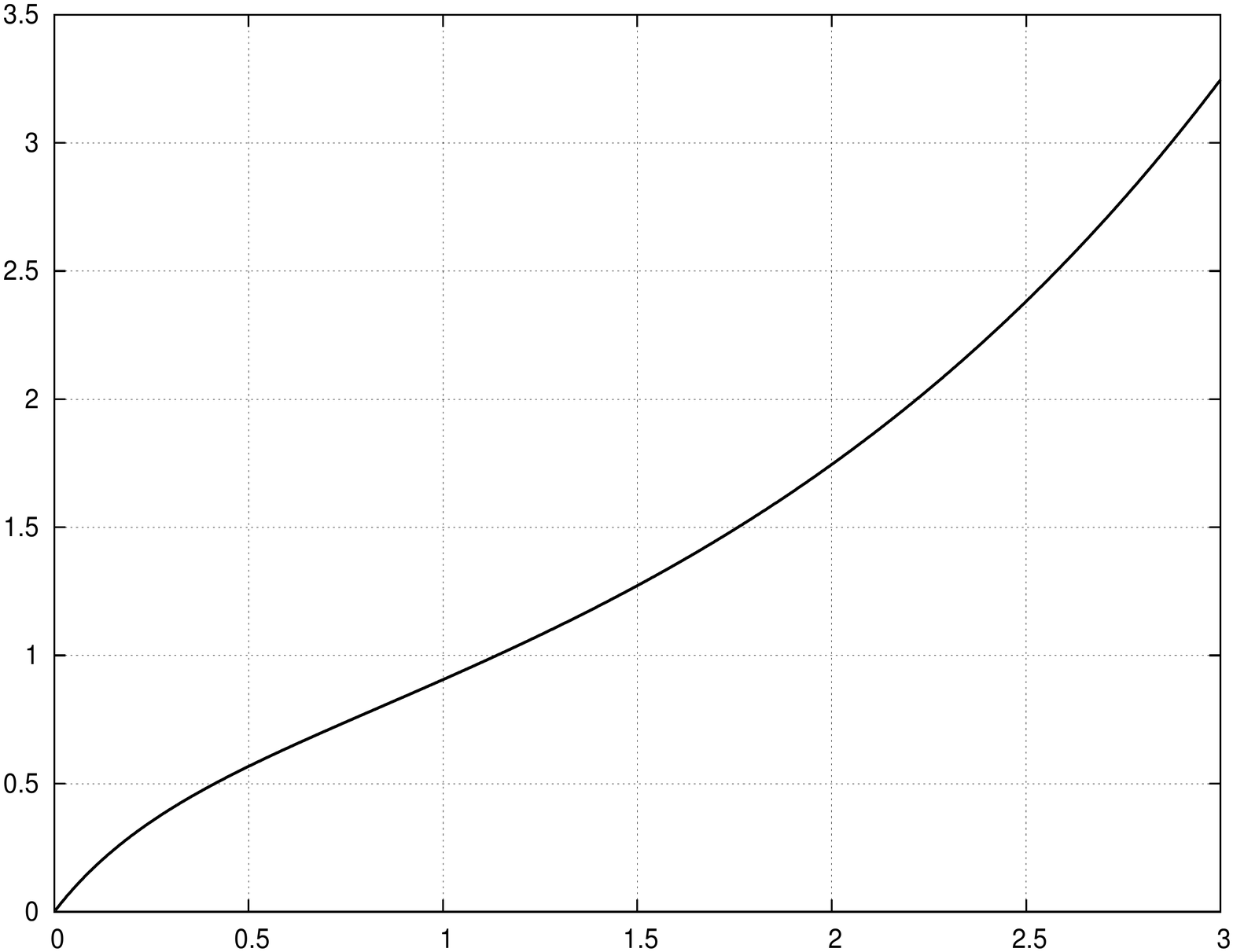}} 
\caption{The scale function $W^{(q)}(x)$ for the parameter set \eqref{par_set2}} 
\label{fig_Wq}
\end{figure}

We fix the following values of parameters
\beq\label{par_set2}
\sigma\in\{0,1\}, \;\; \mu=2,  \; \; C=1, \;\; \alpha=0.5, \;\; \beta=1, \;\; k=1,
\eeq
which define a spectrally-negative process with infinite activity/finite variation jumps and paths of 
finite/infinite variation depending on whether $\sigma=0$ or $\sigma=1$. We compute 1000 numbers $\{\zeta_n\}$ using the algorithm presented in Section \ref{subsec_preliminaries}. This computation takes 0.06 seconds. The qualitative behavior of the roots $\zeta_n$ is very similar to the one presented on Figure \ref{fig_roots}.  Computing the scale function via series representation \eqref{eqn_W^q} is also very fast: it takes just 0.07 seconds to compute 1000 values of $W^{(q)}(x)$ for equally spaced points $x\in [0,3]$. 

The results of computations are presented on Figure \ref{fig_Wq}. From Lemma 8.6 in \cite{Kyprianou} we know that $W^{(q)}(0)=0$ if the process $Y$ has infinite variation, and $W^{(q)}=1/\mu$ if the process has bounded variation (where $\mu$ is the linear drift). One can see from Figure \ref{fig_Wq} that our numerical results are in perfect agreement with the theoretical prediction. It would also be very interesting 
to compare the accuracy and performance of this algorithm for evaluating $W^{(q)}(x)$ with the methods used in \cite{KuKyRi}, 
however we have decided to leave this for future work.

\section{Proofs}\label{sec_proofs}

\begin{lemma}\label{lemma_support}
 Let $\nu(\d x)$ be a finite positive measure such that $\nu([a,+\infty))=0$ for some $a\in \r$. Define $k=\inf\{ a \in \r: \nu([a,+\infty))=0\}$. 
Then for every $\epsilon>0$ there exists $\xi=\xi(\epsilon)>0$ such that for all $z>\xi$ we have
\beq\label{eqn_lemma_support}
e^{(k-\epsilon)z}<\int\limits_{\r} e^{zx} \nu(\d x) < e^{(k+\epsilon)z}.
\eeq
\end{lemma}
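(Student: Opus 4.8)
The plan is to treat the two inequalities in (\ref{eqn_lemma_support}) separately, since the lower bound is the easier of the two and essentially captures the ``mass accumulates near $k$'' intuition, while the upper bound requires splitting the integral at a point slightly below $k$. First I would record the obvious consequence of the definition of $k$: for every $\delta>0$ the set $[k-\delta, k+\infty)$ carries positive $\nu$-mass, i.e. $\nu([k-\delta,\infty))=:c_\delta>0$ (otherwise $k$ would not be the infimum), while $\nu((k,\infty))=0$, so in fact $\nu([k-\delta,k])=c_\delta>0$.

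For the lower bound in (\ref{eqn_lemma_support}): fix $\epsilon>0$ and apply the previous paragraph with $\delta=\epsilon/2$, obtaining $c:=\nu([k-\epsilon/2,k])>0$. Then for $z>0$,
\beqq
\int\limits_{\r} e^{zx}\nu(\d x) \ge \int\limits_{[k-\epsilon/2,\,k]} e^{zx}\nu(\d x) \ge e^{(k-\epsilon/2)z}\, c,
\eeqq
and since $c>0$ is fixed we have $c\, e^{(k-\epsilon/2)z}>e^{(k-\epsilon)z}$ as soon as $e^{\epsilon z/2}>1/c$, i.e. for all $z$ larger than some threshold. For the upper bound, write $\nu(\r)=M<\infty$ and split at $k-\epsilon/2$:
\beqq
\int\limits_{\r} e^{zx}\nu(\d x) = \int\limits_{(-\infty,\,k-\epsilon/2)} e^{zx}\nu(\d x) + \int\limits_{[k-\epsilon/2,\,k]} e^{zx}\nu(\d x) \le M e^{(k-\epsilon/2)z} + M e^{kz},
\eeqq
using $\nu((k,\infty))=0$ on the second piece and the crude bound $e^{zx}\le e^{(k-\epsilon/2)z}$ on the first (valid for $z>0$). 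Since $M e^{(k-\epsilon/2)z}=o(e^{kz})$, the right-hand side is $\le (M+1)e^{kz}<e^{(k+\epsilon)z}$ once $e^{\epsilon z}>M+1$, i.e. for $z$ large enough.

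Taking $\xi(\epsilon)$ to be the maximum of the two thresholds produced above gives the claim. There is no genuine obstacle here — the only point that needs a moment's care is extracting the strictly positive constant $c$ from the definition of $k$ as an infimum (the set $\{a:\nu([a,\infty))=0\}$ being closed from the right, so $k-\delta$ is never in it), and ensuring all crude exponential bounds are applied only for $z>0$ so that $e^{zx}$ is increasing in $x$.
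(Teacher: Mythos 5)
Your proof is correct and follows essentially the same route as the paper: the lower bound comes from the positive mass $\nu([k-\epsilon/2,k])>0$ guaranteed by the definition of $k$ as an infimum, and the upper bound from the crude estimate $e^{zx}\le e^{kz}$ together with $\nu(\r)<\infty$. (Your split of the upper-bound integral at $k-\epsilon/2$ is harmless but unnecessary, since the bound $e^{zx}\le e^{kz}$ already holds on all of $(-\infty,k]$ for $z>0$, which is exactly what the paper uses.)
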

\begin{proof}
 Assume that $\epsilon>0$. Since $\nu((k,\infty))=0$ we have for all $z>0$
 \beqq
 e^{-(k+\epsilon)z}\int\limits_{\r} e^{zx} \nu(\d x)= \int\limits_{(-\infty,k]} e^{z (x-k-\epsilon)} \nu(\d x) < e^{-\epsilon z} \nu (\r),
 \eeqq 
and the right-hand side in the above inequality goes to zero as $z \to +\infty$. This proves the upper bound in (\ref{eqn_lemma_support}). Similarly, 
 \beqq
 e^{-(k-\epsilon)z}\int\limits_{\r} e^{zx} \nu(\d x)>  e^{-(k-\epsilon)z} \int\limits_{[k-\epsilon/2,k]} e^{z x} \nu(\d x) > e^{ \frac{\epsilon}2 z} \nu ([k-\epsilon/2,k]).
 \eeqq 
 According to our definition of $k$, the quantity $\nu ([k-\epsilon/2,k])$ is strictly positive, thus the right-hand side in the above inequality 
goes to $+\infty$ as $z\to +\infty$, which proves the lower bound in (\ref{eqn_lemma_support}). 
\end{proof}


{\it Proof of Theorem \ref{thm_main}:} Let us prove (i). 
L\'evy-Khintchine formula (\ref{Levy_Khinthine2}) implies that the function $\psi(z)$ is analytic in the half-plane $\re(z)>0$ and convex for $z>0$. 
Since $\psi(0)=0$ and $\psi(z)$ increases exponentially as $z\to +\infty$ we conclude that there exists a unique simple real solution to $\psi(z)=q$, 
which we will denote $\zeta_0$. 

Let us prove that there are no other solutions in the vertical strip $0 \le \re(z)<\zeta_0$. From the definition of the Laplace exponent we find
\beqq
e^{t \re(\psi(z))}=\big | \e\left[ e^{z X_t} \right] \big| \le \e\left[ e^{\re(z) X_t} \right]=e^{t \psi(\re(z))}.
\eeqq
This shows that for $0\le \re(z) < \zeta_0$ we have $\re(\psi(z)-q)\le \psi(\re(z))-q<0$, therefore $\psi(z)\ne q$ in this vertical strip and
 we have proved the first part of Theorem \ref{thm_main}. 

Next, let us prove (ii), (iii) and (iv). Let us consider the ascending ladder process $(L^{-1}, H)$ and its Laplace exponent $\kappa(q,z)$. 
Similarly, let $\hat \kappa(q,z)$ denote the Laplace exponent of the descending ladder process $(\hat L^{-1}, \hat H)$, 
see section 6.2 in \cite{Kyprianou} for the definition and properties of these objects.  
Let $\Lambda(\d t, \d x)$ denote the L\'evy measure of the bivariate subordinator $(L^{-1}, H)$ (see section 6.3 in \cite{Kyprianou}). 
One can check that $\kappa(q,z)$ can be expressed in the following form  
\beq\label{thm_main_proof1}
\kappa(q,z)=\kappa(q,0)+a z-\int\limits_0^{\infty} \left( e^{-zx}-1\right) \Lambda^{(q)}(\d x)
\eeq 
where $a\ge 0$ and  
\beqq
\Lambda^{(q)}(\d x)=\int_0^{\infty} e^{-q t} \Lambda(\d t, \d x).
\eeqq
Formula (\ref{thm_main_proof1}) implies that the function $z\mapsto \kappa(q,z)$ is the Laplace exponent of a subordinator with 
the L\'evy measure $\Lambda^{(q)}(\d x)$ and drift $a$, which is killed at rate $\kappa(q,0)$. 
Note that $\Lambda^{(0)}(\d x)$ is the L\'evy measure of the ascending ladder height process $H$. The jumps in the process $H$ happen when the process $X$ 
jumps over the past supremum, thus it is clear that if the jumps of $X$ are bounded from above by $k$, then the same is true for the process $H$. 
Therefore $\Lambda^{(0)}((k,\infty))=0$, and since for each Borel set $B$ the quantity $\Lambda^{(q)}(B)$ is decreasing in $q$ 
we conclude that $\Lambda^{(q)}((k,\infty))=0$ for all $q\ge 0$. Therefore we have proved that the support of the measure $\Lambda^{(q)}(\d x)$ lies inside the interval $(0,k]$. 

Let us define $\tilde k=\inf\{ x>0: \Lambda^{(q)}((x,\infty))=0\}$. Note that $\tilde k \le k$, since we have established already that  
the support of the measure $\Lambda^{(q)}(\d x)$ lies inside the interval $(0,k]$. Using formula (\ref{thm_main_proof1}) and the fact that $\Lambda^{(q)}(\d x)$ has finite support we conclude that $\kappa(q,z)$ is an entire function of $z$. 


Using the Wiener-Hopf factorization (see Theorem 6.16 in \cite{Kyprianou}) we find that for $\re(z)\le 0$
\beq\label{WH_fact_1}
\bigg| \frac{\kappa(q,0)}{\kappa(q,-z)} \bigg|=\big | \e\left[ e^{z S_{\ee(q)}} \right] \big| <  \e\left[ e^{\re(z) S_{\ee(q)}} \right]\le 1,
\eeq
this shows that the function $\kappa(q,-z)$ has no zeros in the half-plane $\re(z)\le 0$. By the same argument we conclude that $\hat \kappa(q,z)$ has 
no zeros in the half-plane $\re(z) \ge 0$. Therefore, using 
the Wiener-Hopf factorization $q-\psi(z)=\kappa(q,-z) \hat \kappa(q,z)$ we find that  all zeros of $\kappa(q,-z)$ in 
the half-plane $\re(z)>0$ coincide with the zeros of $q-\psi(z)$. Recall that we have labeled these zeros as $\{\zeta_0,\zeta_n,\bar \zeta_n\}_{n\ge 1}$, where
$\zeta_n \in {\mathcal Q}_1$ are arranged in the order of increase of absolute value.

Next, we use Proposition 2 in \cite{Bertoin} and the fact that $\kappa(q,z)$ is the Laplace exponent of a subordinator to conclude that 
$\kappa(q,\i z) = O(z)$ as $z\to \infty$, $z\in \r$. Therefore the following integral converges
(which is equivalent to saying that $\kappa(q,\i z)$ belongs to Cartwright class of entire functions)
\beqq
\int\limits_{\r} \frac{\max(\ln(|\kappa(q,\i z)|),0)}{1+z^2} \d z,
\eeqq 
and we can apply Theorem 11, page 251 in \cite{Levin1980} (see also remark 2, page 130 in \cite{Levin1996}) to conclude 
that $\kappa(q,-z)$ can be factorized as 
\beq\label{thm_main_proof2}
\kappa(q,-z)=\kappa(q,0) e^{\frac{\tilde k}2 z}  \left(1-\frac{z}{\zeta_0} \right) \prod\limits_{n\ge 1} \left(1-\frac{z}{\zeta_n} \right)\left(1-\frac{z}{\bar\zeta_n} \right).
\eeq
Moreover, all of the roots $\zeta_n$ (except for a set of zero density) lie inside arbitrarily small angle 
$\pi/2-\epsilon<\arg(z)<\pi/2$, the density of the roots in this angle exists and is equal to $\tilde k/(2\pi)$ and the series
$\sum\re\left(\zeta_n^{-1} \right)$ converges. 

Using (\ref{thm_main_proof1}) and the following result
\beqq
\e\left[ e^{-zS_{\ee(q)}} \right]=\phiqp(\i z)=\frac{\kappa(q,0)}{\kappa(q,z)},
\eeqq
(see Theorem 6.16 in \cite{Kyprianou}) we obtain formula  (\ref{wh_factor}) for the Wiener-Hopf factors, and in order to finish the proof 
we only have to show that $\tilde k= k$. This fact seems to be intuitively clear, as it means that the upper boundary of the support of the L\'evy measure of the 
process $X$ is exactly equal to the upper boundary of the support of the L\'evy measure of the ascending ladder height process $H$. However, we were not able 
to find a simple probabilistic argument to prove this statement, and we will use an analytic approach instead. 
Assume that $\tilde k<k$ and define $\epsilon=(k-\tilde k)/3$. 
Using the L\'evy-Khintchine formula (\ref{Levy_Khinthine2}) and Lemma \ref{lemma_support} one can check that  $|\psi(z)|>\exp((k-\epsilon)z)$ for all $z>0$ large enough. 
Similarly, using (\ref{thm_main_proof1}), Lemma \ref{lemma_support} and the fact that $\Lambda^{(q)}(\d x)$ has support on $(0,\tilde k]$ we can check that 
$|\kappa(q,-z)|<\exp((\tilde k+\epsilon)z)$ for all $z>0$ large enough. 
From the Wiener-Hopf factorization $\hat \kappa(q,z)=(q-\psi(z))/\kappa(q,-z)$ we conclude that
\beqq
|\hat \kappa(q,z)|> \frac{e^{(k-\epsilon)z}}{e^{(\tilde k+\epsilon)z}}=e^{\epsilon z}
\eeqq
for all $z>0$ large enough. But this is not possible, as we know that $\hat \kappa(q,z)$ is the Laplace exponent of a subordinator, thus 
$\hat \kappa(q,z)=O(z)$ as $z\to \infty$, $\re(z)\ge 0$ (see Proposition 2 in \cite{Bertoin}). Therefore the inequality $\tilde k<k$ is not true. At the same time
we must have  $\tilde k \le k$, since the support of the measure $\Lambda^{(q)}(\d x)$ lies inside the interval $(0,k]$. 
This implies that $\tilde k=k$ and ends the proof of parts (ii), (iii) and (iv) of Theorem \ref{thm_main}. 
\qed
\\


 Recall that $\Delta \arg(f,C)$, which was defined in  (\ref{def_arg}), denotes the change in the argument of $f(z)$ over a curve $C$. 
The following result will be used in the proof of Theorem \ref{thm_asymptotics}. 
\begin{proposition}\label{prop_Delta_arg_property}
 Assume that $f$ and $g$ are analytic on a piecewise curve $C$ and $f(z)\ne 0$ for $z\in C$. If for some $\epsilon \in (0,1)$ we have $|g(z)|<\epsilon |f(z)|$ for all $z\in C$, then
\beq\label{Delta_arg_property}
\big | \Delta \arg(f+g,C) - \Delta \arg(f,C) \big |< 4 \epsilon. 
\eeq
\end{proposition}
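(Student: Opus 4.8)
The plan is to bound $\Delta \arg(f+g,C)$ by comparing it to $\Delta \arg(f,C)$ using the elementary inequality $|\arg(w)| \le \frac{\pi}{2}|w-1|$ valid for $\re(w) > 0$, or more crudely the estimate $|\mathrm{Im}\,\log(w)| = |\arg(w)|$ for $w$ in a half-plane. First I would write
\beqq
f(z)+g(z) = f(z)\left(1 + \frac{g(z)}{f(z)}\right),
\eeqq
which makes sense on $C$ because $f(z)\ne 0$ there. Taking the change in argument over $C$ and using additivity of $\arg$ under products, this gives
\beqq
\Delta \arg(f+g,C) = \Delta \arg(f,C) + \Delta \arg\left(1+\frac{g}{f},C\right),
\eeqq
so the whole problem reduces to showing that $\big|\Delta \arg(1+g/f,C)\big| < 4\epsilon$ whenever $|g/f| < \epsilon < 1$ on $C$.

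Next I would analyze the function $w(z) = 1 + g(z)/f(z)$. By hypothesis $w(z)$ lies in the open disk $D(1,\epsilon)$ for all $z\in C$. Since $\epsilon < 1$, this disk is contained in the right half-plane $\re(w) > 0$, on which the principal branch of $\arg$ is continuous and single-valued with values in $(-\pi/2,\pi/2)$. Therefore, using the representation $\Delta\arg(w,C) = \im\int_C \frac{w'(z)}{w(z)}\,\d z = \im\big[\log w(z)\big]_{\text{endpoints of }C}$ when $C$ is an arc (and summing over components / handling the closed case by the fact that the winding number of a curve in $D(1,\epsilon)$ around $0$ is zero), the total change in argument is a sum of differences of values of $\arg(w)$, each difference being at most the diameter of $\arg(D(1,\epsilon))$. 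Concretely, for $w \in D(1,\epsilon)$ one has $|\arg(w)| \le \arcsin(\epsilon) \le \frac{\pi}{2}\epsilon < 2\epsilon$, so $|\Delta\arg(1+g/f,C)| \le 2\cdot 2\epsilon = 4\epsilon$, with the factor $2$ accounting for the two endpoints (or the oscillation back and forth along $C$).

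The main obstacle is making the bookkeeping rigorous for a general piecewise smooth curve $C$ that need not be simple or closed: one must be careful that $\Delta\arg$ is defined via the integral in \eqref{def_arg} and argue that, since the image $w(C)$ stays inside the convex set $D(1,\epsilon) \subset \{\re > 0\}$, the integral $\im\int_C \frac{w'}{w}\,\d z$ equals $\arg(w(z_{\mathrm{end}})) - \arg(w(z_{\mathrm{start}}))$ for an open arc and vanishes for a closed loop — in either case its absolute value is at most $\sup_{w\in D(1,\epsilon)}|\arg w| + \inf_{w\in D(1,\epsilon)}|\arg w|$-type quantity, hence at most $2\arcsin\epsilon < 4\epsilon$. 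Once that reduction is in place the numerical constant $4$ follows from $\arcsin\epsilon \le \frac{\pi}{2}\epsilon < 2\epsilon$, and the proof is complete.
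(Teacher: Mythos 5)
Your proposal is correct and follows essentially the same route as the paper: factor $f+g=f\,(1+g/f)$ to reduce to bounding $\Delta\arg(1+g/f,C)$, observe that the image curve lies in the disk $|w-1|<\epsilon$ where $|\arg(w)|\le\arcsin(\epsilon)<2\epsilon$, and conclude the change of argument is less than $4\epsilon$. Your extra care with the endpoint/closed-curve bookkeeping only makes explicit what the paper's "elementary geometric considerations" leave implicit.
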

\begin{proof}
From the definition of $\Delta \arg(f,C)$  (\ref{def_arg}) it follows that
\beqq
\Delta \arg(f+g,C) = \Delta \arg(f,C)+\Delta \arg(1+g/f,C).   
\eeqq
Due to the condition $|g(z)/f(z)|<\epsilon$ for $z\in C$ we know that the set $\{w=1+g(z)/f(z): \; z \in C\}$ lies inside the circle of radius $\epsilon$ with center at one. Using elementary geometric considerations we check that  
\beqq
\max\{|\arg(w)|: \; |w-1|<\epsilon\}=\arcsin(\epsilon)<2\epsilon,
\eeqq 
thus the change of the argument of any curve lying inside the circle $|w-1|<\epsilon$ cannot be greater 
than $4\epsilon$, which proves (\ref{Delta_arg_property}).
\end{proof}


We will also need the following two facts:
\begin{itemize}
 \item[(i)] For any real $u>0$ and any complex number $v$ for which $|\arg(v)|<\pi/2$ we have
\beq\label{fact1} 
  |\arg(u+v)| \le |\arg(v)|.
\eeq
 \item[(ii)]  For any two complex numbers $u$, $v$ we have 
\beq\label{fact2}
|u+v|\ge  \cos(\arg(u)-\arg(v)) (|u|+|v|).
\eeq
\end{itemize}
 Both of the above facts can be easily verified by elementary geometric considerations


\vspace{0.5cm}
{\it Proof of Theorem \ref{thm_asymptotics}:}
Let us denote the set of solutions to $\psi(z)=q$ in ${\mathcal Q}_1$ as ${\mathcal Z}=\{\zeta_n\}_{n\ge 1}$ and introduce numbers
\beqq
z_n=\frac{1}{k}  \left[\ln \left( \bigg | \frac{B}{A} \bigg| \right)+(a+b) \ln \left(\frac{2n\pi }{k}\right) \right] +
\frac{\i }{k} \left[ \arg\left( \frac{B}{A} \right)+\left( \frac12 (a+b) + 2n+1\right) \pi \right].
\eeqq
First let us prove that every solution to the equation $\psi(z)=q$ which has sufficiently large absolute value must be close to one of $z_n$. 
From the asymptotic expansion
(\ref{psi_asymptotics}) we find that when $z\to \infty$, $z\in {\mathcal Z}$ we have 
\beq\label{psi_asymptotics_2}
 e^{kz}=-\frac{B}{A} z^{a+b} (1+o(1)) .
\eeq
Considering the absolute value of both sides  of (\ref{psi_asymptotics_2}) we conclude
\beq\label{re_z_asymptotics}
\re(z)=\frac{1}{k} \ln\bigg |\frac{B}{A} \bigg| + \frac{a+b}{k} \ln|z|+o(1), \;\;\; z\to \infty, \; z\in {\mathcal Z}.  
\eeq
From the above asymptotic expression it follows that $\re(z)=O(\ln|z|)$, which in turn implies
\beq\label{arg_z_asymptotics}
\arg(z)=\frac{\pi}2+o(1), \;\;\; z\to \infty, \; z\in {\mathcal Z}.  
\eeq

Next, considering the argument of both sides of (\ref{psi_asymptotics_2}), we find that 
\beqq
\arg\left( e^{k z} \right)=\arg\left( -\frac{B}{A} \right)+ \arg\left( z^{a+b}\right)+\arg(1+o(1)) \;\;\; (\mod \; 2 \pi),
\eeqq
and using \eqref{arg_z_asymptotics} we conclude that there exists an integer number $n$ such that 
\beq\label{im_z_asymptotics}
k\im(z)=\arg\left(\frac{B}{A}\right)+(2n+1)\pi + \frac12 (a+b) \pi + o(1).  
\eeq 

Finally, asymptotic expressions (\ref{re_z_asymptotics}) and (\ref{im_z_asymptotics}) imply that 
\beqq
\re(z)=\frac{1}{k}  \left[\ln \left( \bigg | \frac{B}{A} \bigg| \right)+(a+b) \ln \left(\frac{2n\pi }{k}\right) \right]+o(1),
\eeqq
which together with (\ref{im_z_asymptotics}) shows that every sufficiently large solution to $\psi(z)=q$ must be close to one
of the numbers $z_n$.

\begin{figure}
\centering
\subfloat[][]{\label{fig_proof_a}\includegraphics[height =7cm]{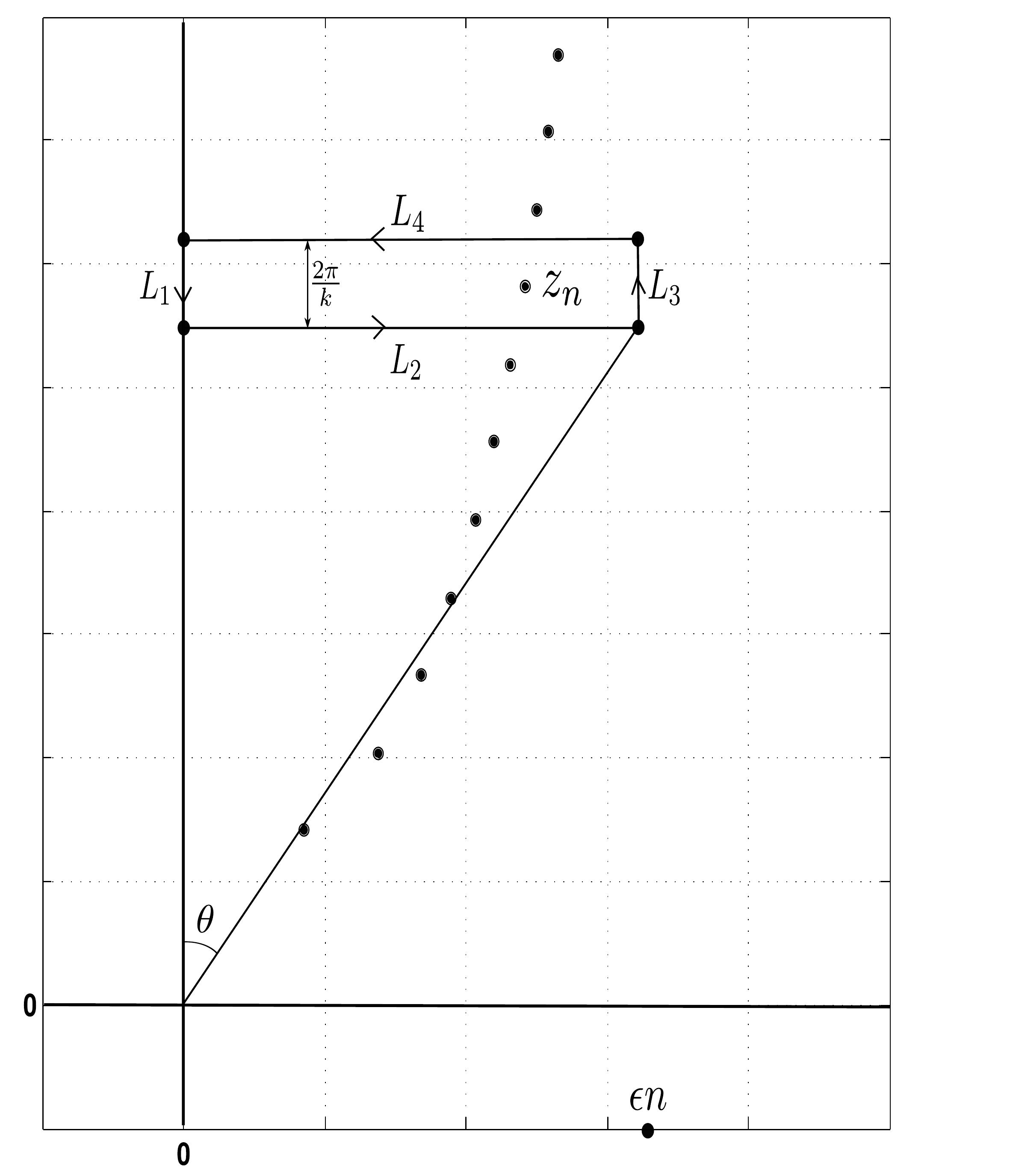}} 
\subfloat[][]{\label{fig_proof_b}\includegraphics[height =7cm]{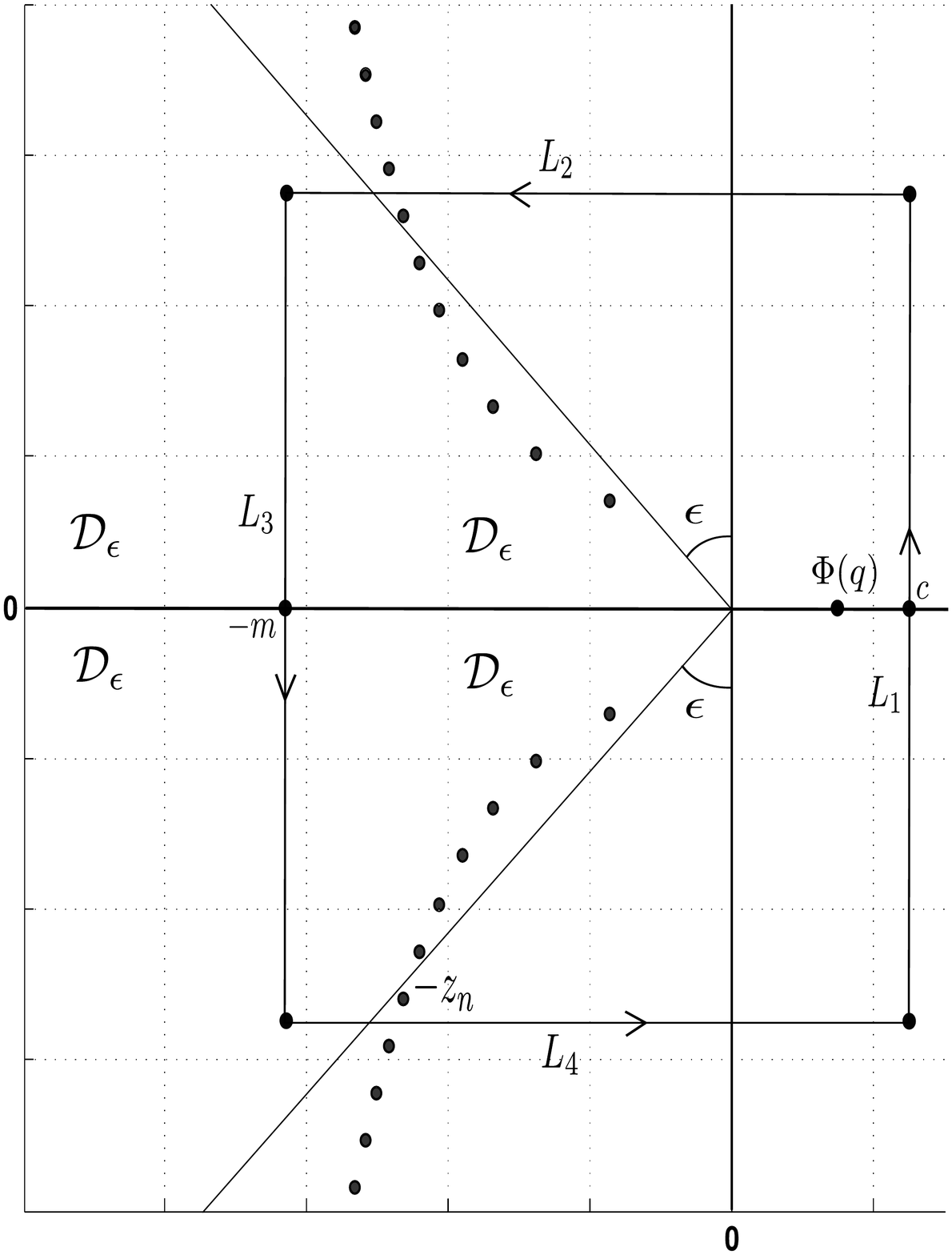}} 
\caption{Illustrations to the proofs of Theorem \ref{thm_asymptotics} and Theorem \ref{thm_w^q}.} 
\label{fig_Vdxds}
\end{figure}

Now let us prove the converse statement: for all $n$ large enough there is always a solution of the equation $\psi(z)=q$ near a point $z_n$. 
We set $\epsilon=1/(16k(a+b))$, assume that $n$ is a large positive integer and consider the following contour $L=L(n)=L_1 \cup L_2 \cup L_3 \cup L_4$, defined as
\beqq
 L_1&=&L_1(n)=\{ z\in \c: \; \re(z)=0, \; |\im(z)-\im(z_n)|\le \frac{\pi}{k}\}, \\
 L_2&=&L_2(n)=\{ z\in \c: \; \im(z)=\im(z_n)-\frac{\pi}{k}, \; 0\le \re(z) \le \epsilon n\}, \\
 L_3&=&L_3(n)=\{ z\in \c: \; \re(z)=\epsilon n, \; \; |\im(z)-\im(z_n)|\le \frac{\pi}{k}\}, \\
 L_4&=&L_4(n)=\{ z\in \c: \; \im(z)=\im(z_n)+\frac{\pi}{k}, \; 0\le \re(z) \le \epsilon n\}.
\eeqq
As we see on Figure \ref{fig_proof_a}, $L$ is a rectangle of dimensions $2\pi/k$ and $\epsilon n$, which contains exactly one point $z_n$ for $n$ large enough.
We assume that this contour is oriented counter-clockwise. Our goal is to prove that $\Delta \arg(\psi(z)-q,L(n))=2\pi$ for all $n$ large enough, and our strategy is 
to show that the change in the argument over $L_1$, $L_2$ and $L_4$ is small, while the change in the argument over $L_3$ is close to $2\pi$. 

First of all, it is clear that the number of zeros of $\psi(z)-q$ inside the contour $L$ is the same as the number of zeros of $F(z)=z^a (\psi(z)-q)$ inside the same contour. Asymptotic expression (\ref{psi_asymptotics}) tells us that
\beq\label{asymptotics_F(z)}
F(z)=A e^{kz}+ B z^{a+b} + o\left(e^{kz}\right)+o\left(z^{a+b}\right), \;\;\; z\to \infty, \; z \in {\mathcal Q}_1. 
\eeq

Let us first consider the interval $L_1$. Since $\arg(z)=\pi/2$ on this interval, we have $\Delta \arg(z^{a+b},L_1)=0$. 
Equation (\ref{asymptotics_F(z)}) implies that $F(z)=B z^{a+b} + o\left( z^{a+b} \right)$ when $z\in L_1$ and $n\to +\infty$, 
thus we use Proposition \ref{prop_Delta_arg_property} and conclude that
for all $n$ large enough we have $|\Delta \arg(F,L_1(n))|<1/4$.  

Let us consider the contour $L_2$. From the definition of this contour it follows that for all $z\in L_2$
\beq\label{contour_L2_1}
\arg\left(\frac{A}{B}\exp\left(kz-\frac{\pi \i}2(a+b)\right)\right)=0. 
\eeq
Also, looking at Figure \ref{fig_proof_a} one can check that 
$\pi/2-\theta \le \arg(z) \le \pi/2$ for all $z \in L_2$, where  we have defined
\beqq
\theta=\theta(\epsilon,n)=\arctan\left( \frac{\epsilon n}{\im(z_n)-\pi/k} \right).
\eeqq
Note that as $n\to +\infty$ we have $\theta(\epsilon,n)\to \arctan(\epsilon k /(2\pi))$, and the latter quantity is smaller than $k \epsilon $.  This implies that for all $n$ large enough we have $\theta(\epsilon,n)< k \epsilon$. Thus we have proved that for all $n$ large enough we have  $\pi/2-k \epsilon \le \arg(z) \le \pi/2$ when $z \in L_2$, which is equivalent to
\beq\label{contour_L2_2}
\bigg |\arg\left(z^{a+b}\exp\left(-\frac{\pi \i}2(a+b)\right)\right)\bigg|< k (a+b) \epsilon=\frac{1}{16}, \;\;\; z\in L_2. 
\eeq
 From (\ref{contour_L2_1}),  (\ref{contour_L2_2}) and property \eqref{fact1}  it follows that for all $z\in L_2$ the number
\beqq
w=\frac{A}{B}\exp\left(kz-\frac{\pi \i}2(a+b)\right)+z^{a+b}\exp\left(-\frac{\pi \i}2(a+b)\right)
\eeqq 
 lies in the sector $|\arg(w)|<1/16$. From here we find that
\beq\label{Delta_L2}
|\Delta \arg(A e^{kz}+ B z^{a+b}, L_2)|< \frac{1}{8},
\eeq
and at the same time, with the help of property (\ref{fact2}) we deduce that 
for all $z\in L_2$ 
\beq\label{estimate_L2}
\big| A e^{kz}+ B z^{a+b} \big| > \cos\left(\frac{1}{16} \right) 
\left( \big|A e^{kz} \big| +  \big |B z^{a+b} \big| \right).
\eeq   
Next, if  $g(z)= o\left(e^{kz}\right)+o\left(z^{a+b}\right)$ as $z\to \infty$, then 
\beqq
g(z)=o\left( \big|A e^{kz} \big| +  \big |B z^{a+b} \big| \right),
\eeqq
and we again can use (\ref{Delta_L2}) and (\ref{Delta_arg_property}) with $f(z):=F(z)$ and $g(z)$ defined above, to conclude  that for all $n$ large enough 
\beqq
|\Delta \arg(F(z),L_2(n))-\Delta \arg(A e^{kz}+ B z^{a+b}, L_2)|<1/8.
\eeqq 
The above inequality and estimate \eqref{Delta_L2} imply that for all $n$ large enough 
we have $|\Delta \arg(F(z),L_2(n))|<1/4$. Using exactly the same technique we obtain an identical estimate the change of argument over $L_4(n)$. 

Finally, on the contour $L_3$ we have $F(z)=A\exp(kz)+o(\exp(kz))$. Since $\Delta \arg(\exp(kz),L_3)=2\pi$, we use Proposition \ref{prop_Delta_arg_property}  and
 conclude that for all $n$ large enough $|\Delta(F,L_3(n))-2\pi|<1/4$. Combining these four estimates we see that
 for all $n$ large enough we have $|\Delta(F,L(n))-2\pi|<1$, and since we know that $\Delta \arg(F,L)$ must be an integer multiple of $2\pi$ we conclude that $\Delta \arg(F,L)=2\pi$, thus there is exactly one solution to $\psi(z)=q$ inside the contour $L(n)$. 
 From the first part of the proof we know that every sufficiently large solution to $\psi(z)=q$ must be close to $z_m$ for some $m$, and since by construction there is only one such point $z_n$ inside the contour $L(n)$, we conclude that for every $n$ large enough there is a solution to $\psi(z)=q$ close to $z_n$.
\qed
\\


{\it Proof of Proposition \ref{prop_asymptotic_psi}:}
Let us assume that $\alpha<1$ and $\hat \alpha<1$. Then we can take the cutoff function $h(x)\equiv 0$ in (\ref{Levy_Khinthine2}), and we can rewrite 
$\psi(z)$ as follows
\beq\label{psi_decomposition}
\psi(z)=\frac12 \sigma^2 z^2 +\mu z + \psi_1(z) + \psi_2(z),
\eeq 
where we have denoted
\beq\label{def_psi1}
\psi_1(z)&=&\int\limits_{(-\infty,0)} \left( e^{z x}-1\right) \Pi(\d x), \\ 
\psi_2(z)&=&\int\limits_{(0,k]} \left( e^{z x}-1\right) \Pi(\d x). \label{def_psi2}
\eeq

First, let us study the asymptotic behavior of $\psi_1(z)$. If $\hat \alpha< 0$ then part (1) of Definition 
\ref{definition_Levy_measure} implies that $\Pi((-\infty,0))<\infty$, thus 
$\psi_1(z)=O(1)$ as $z\to \infty$, $\re(z) \ge 0$.  If $0<\hat \alpha<1$, then part (1) of Definition 
\ref{definition_Levy_measure}  implies that for $x \in \r^-$
\beqq
\Pi(\d x)=\left( \hat C \hat \alpha |x|^{-1-\hat \alpha} + \sum\limits_{j=1}^{\hat m} \hat C_j \hat \alpha_j |x|^{-1-\hat \alpha_j} \right) \d x+ \nu(\d x),
\eeqq
where $\nu(\d x)$ is a finite measure on $\r^{-}$ (note that $\nu(\d x)$ does not have to be a positive measure). 
It is clear that 
\beqq
 \bigg | \int\limits_{-\infty}^0 \left( e^{zx}-1 \right) \nu(\d x) \bigg | < 2 |\nu ((-\infty, 0))|
\eeqq 
for $\re(z) \ge 0$. Using integration by parts we find that for $\re(z) > 0$
\beq\label{int_identity_1}
\hat \alpha \int\limits_{-\infty}^0 \left( e^{zx}-1 \right) |x|^{-1-\hat \alpha} \d x=-\Gamma(1-\hat \alpha) z^{\hat \alpha}.  
\eeq  
Combining the above three equations and (\ref{def_psi1}) we conclude that 
\beq\label{asymptotics_psi1}
\psi_1(z)=- \hat C \Gamma(1-\hat \alpha) z^{\hat \alpha} + o (z^{\hat \alpha})
\eeq
 as $z\to \infty$, $\re(z) > 0$.  

Next, let us investigate the asymptotic behavior of $\psi_2(z)$. Let us assume that $n>0$ (where $n$ is the constant in the Definition \ref{definition_Levy_measure}), 
the proof in the case $n=0$ is very similar. Since $n>0$, 
Definition \ref{definition_Levy_measure} implies that the measure $\Pi(\d x)$ restricted to $\r^+$ has a density $\pi(x)$, which belongs to 
${\mathcal{PC}}^{n}((0,k])$. Let us assume first that  $\pi \in {\mathcal{C}}^{n}((0,k])$, we will relax this assumption later.

First let us consider the case $\alpha < 0$, which is equivalent to $\Pi((0,\infty))<\infty$. 
Applying integration by parts $n$ times to (\ref{def_psi1}) we obtain 
\beqq
\psi_2(z)&=&\int\limits_0^k e^{zx} \pi(x) \d x - \int\limits_0^k \pi(x) \d x\\
&=&\sum\limits_{m=0}^{n-1} (-1)^m \left[ \pi^{(m)}(k-)e^{kz}z^{-m-1}-\pi^{(m)}(0+) z^{-m-1} \right]
+(-1)^n z^{-n} \int\limits_0^k \pi^{(n)}(x) e^{zx} \d x  - \int\limits_0^k \pi(x) \d x.
\eeqq
Since $\pi^{(n)}(x)$ is continuous we conclude that 
\beqq
\int\limits_0^k \pi^{(n)}(x) e^{zx} \d x= o \left( e^{kz} \right)
\eeqq
as $z\to \infty$, $\re(z) > 0$. At the same time, due to Definition \ref{definition_Levy_measure} we have $\pi^{(m)}(k-)=0$ for all $m\le n-2$. Using the above two results and 
the fact that $\frac{\d}{\d x} \Pi^+ (x)=-\pi(x)$ we obtain
\beq\label{asymptotics_psi2_1}
\psi_2(z)=(-1)^{n} \Pi^+ {}^{(n)}(k-) e^{kz}z^{-n}+o \left( e^{kz} z^{-n} \right) + O(1),
\eeq 
as $z\to \infty$, $\re(z) > 0$. Equation (\ref{asymptotics_psi2_1}) shows that the exponential term in the right-hand side of (\ref{psi_asymptotics})
comes from the upper boundary of the support of the L\'evy measure and from the first non-zero derivative of $\bar \Pi^+(x)$ at $k-$.

Next, let us assume that $\alpha \in (0,1)$. Then, according Definition \ref{definition_Levy_measure}, the density of the L\'evy measure can be expressed as follows
\beqq
\pi(x)=C\alpha x^{-1-\alpha}+\sum\limits_{j=1}^{m} C_j \alpha_j x^{-\alpha_j}+g(x),  
\eeqq
where $g \in {\mathcal{C}}^{n}([0,k])$.  We can rewrite $\psi_2(z)$ as 
\beq\label{expression_psi2}
\psi_2(z)= C F(\alpha,k,z)+ \sum\limits_{j=1}^{m} C_j F(\alpha_j,k,z)+ \int\limits_0^k e^{zx} g(x) \d x - \int\limits_0^k g(x) \d x,
\eeq
where we have defined 
\beqq
 F(\alpha,k,z)=\alpha \int\limits_0^k \left( e^{zx}-1 \right) x^{-1-\alpha} \d x. 
\eeqq
Let us obtain an asymptotic expansion of $F(\alpha,k,z)$ as $z\to \infty$, $z\in {\mathcal Q}_1$. Expanding $\exp(zx)$ in Taylor series centered at zero 
and integrating term by term we find that
\beq\label{eqn_F_alpha_k_z}
F(\alpha,k,z)=  k^{-\alpha} \left[ 1 - 
{}_1 F_1(-\alpha,1-\alpha;kz) \right],
\eeq
where ${}_1F_1(a,b;z)$ is the confluent hypergeometric function defined by (\ref{def_1F1}). 
Applying asymptotic formula (2) on page 278 in \cite{Erdelyi1955V3} we conclude that
\beq\label{asymptotics_F_alpha_k_z}
F(\alpha,k,z) &=&  -  \Gamma(1-\alpha) e^{-\pi \i \alpha} z^{\alpha}+
  \alpha k^{-1-\alpha} \frac{e^{kz}}{z} \sum\limits_{m=0}^N  \frac{(1+\alpha)_m}{(kz)^m} \\ \nonumber
 &+&O(1)+O\left(z^{\alpha-1}\right)
+O\left( e^{z}z^{-N-2}\right),
\eeq
as $z\to \infty$, $z\in {\mathcal Q}_1$. Formula (\ref{asymptotics_F_alpha_k_z}) and our previous result (\ref{asymptotics_psi2_1}) imply that 
\beq\label{psi_2_final}
\psi_2(z)&=& 
(-1)^{n} \Pi^+ {}^{(n)}(k-) e^{kz}z^{-n}-  \Gamma(1-\alpha) e^{-\pi \i \alpha} z^{\alpha}\\ \nonumber
&+&o \left(e^{kz}z^{-n} \right) + o\left(z^{\alpha}\right)+O(1),
\eeq
as $z\to \infty$, $z\in {\mathcal Q}_1$. 

As a final step, let us relax the assumption $\pi \in {\mathcal{C}}^{n}([0,k])$. Assume that there is a unique point $x_1 \in (0,k)$ at which
$\pi^{(n)}(x)$ does not exist (the proof in the general case is exactly the same). 
 According to Definition \ref{definition_Levy_measure}, $\pi \in {\mathcal C}^{n-2}(\r^+)$, thus
$\pi^{(m)}(x_1-)=\pi^{(m)}(x_1+)$ for $m\le n-2$. Applying integration by parts $n$ times on each subinterval $(0,x_1)$ and $(x_1,k)$ 
we would obtain an expression (\ref{psi_2_final}) plus an extra term of the form
\beqq
h(z)=(-1)^{n-1} \left[\pi^{(n-1)}(x_1-)-\pi^{(n-1)}(x_1+) \right] e^{x_1 z}z^{-n}.
\eeqq
However, it is easy to see that $h(z)=o(e^{kz}z^{-n}) + o(z^{\alpha})$ as $z\to \infty$, $z\in {\mathcal Q}_1$. 
This is true since in the domain ${\mathcal D}=\{\re(z)<\ln(\ln(\im(z))), \; \im(z)>e\}$ we have
$|\exp(x_1 z)|=\exp(x_1 \re(z))=O(\ln|z|)=o(z^{a})$, while in the domain ${\mathcal Q}_1\setminus {\mathcal D}$ we have $\re(z)\to \infty$ when 
$z\to \infty$, which implies $\exp(x_1 z)z^{-n}=o(\exp(kz)z^{-n})$.

Formula (\ref{psi_decomposition}) and asymptotic expressions (\ref{psi_2_final}), (\ref{asymptotics_psi1})  imply that 
$\psi(z)$ satisfies (\ref{psi_asymptotics}) with coefficients $A$, $a$, $B$ and $b$ as in Proposition \ref{prop_asymptotic_psi}, except that there
would be an extra term $O(1)$ in the right-hand side of (\ref{psi_asymptotics}), which comes from (\ref{psi_2_final}). According to our assumption, the process $X$ 
is not a compound Poisson process, thus the constant $b$ defined in Proposition \ref{prop_asymptotic_psi} is strictly positive, and $O(1)=o(z^b)$, therefore
this extra term can be absorbed into $o(z^b)$. This ends the proof in the case $\alpha<1$ and $\hat \alpha <1$. 

In the case when one or both of $\alpha$, $\hat \alpha$ are greater than one the proof is identical, except that we will have to do one extra integration by
parts for proving (\ref{asymptotics_psi1}). The details are left to the reader.
 \qed


{\it Proof of Theorem \ref{thm_w^q}:} 
Let us denote 
\beqq
z_n=\frac{1}{k}  \left[\ln \left( \bigg | \frac{B}{A} \bigg| \right)+(a+b) \ln \left(\frac{2n\pi }{k}\right) \right] +\frac{\i }{k} \left[ \arg\left( \frac{B}{A} \right)+\left( \frac12 (a+b) + 2n+1\right) \pi \right].
\eeqq
Due to Definition \ref{definition_Levy_measure}, the L\'evy measure $\Pi(\d x)$ can only have a finite number of atoms. From Corollary 2.5 in \cite{KuKyRi} we find that 
$W^{(q)}(x)$ can only have a finite number of points where it is not differentiable. Thus we can use (\ref{def_W^q}) and the Bromwich integral formula to conclude that 
for any $c> \Phi(q)$ 
\beq\label{proof_Wq_1}
W^{(q)}(x)=\frac{1}{2\pi \i } \int\limits_{c+\i \r} \frac{e^{zx}}{\psi_Y(z)-q} \d z.
\eeq

For $n>0$ and $m<0$ we define the contour $L=L(n,m)=L_1\cup L_2 \cup L_3 \cup L_4$, where  
\beqq
L_1&=&L_1(n)=\{ \re(z)=c, \; -\im(z_n)-\pi/k<\im(z)<\im(z_n)+\pi/k\},\\
L_2&=&L_2(n,m)=\{ \im(z)=\im(z_n)+\pi/k, \; m<\re(z)<c\},\\
L_3&=&L_3(n,m)=\{ \re(z)=m, \; -\im(z_n)-\pi/k<\im(z)<\im(z_n)+\pi/k\},\\
L_4&=&L_4(n,m)=\{ \im(z)=-\im(z_n)-\pi/k, \; m<\re(z)<c\}.
\eeqq
This contour is shown on figure \ref{fig_proof_b}. We assume that $L$ is oriented counter-clockwise. Using the residue theorem we deduce
\beq\label{sum_residues}
\int\limits_{L} \frac{e^{zx}}{\psi_Y(z)-q} \d z= \frac{e^{\Phi(q)x}}{\psi_Y'(\Phi(q))} + \frac{e^{-\zeta_0 x}}{\psi_Y'(-\zeta_0)}+ 2  \sum \re\left[ \frac{e^{-\zeta_j x}}{\psi_Y'(-\zeta_j)} \right],
\eeq
where the summation is over all $j\ge 1$, such that  $-\zeta_j$ lie inside the contour $L$. 

First, assume that $n$ is fixed and let us consider what happens as $m\to -\infty$. According to the asymptotic relation (\ref{psi_Y_asymptotics}), as $\re(z)\to -\infty$ the
function $\psi_Y(z)$ increases exponentially (uniformly in every horizontal strip $|\im(z)|<C$). In particular, for $m$ large enough we would have $|\psi_Y(z)-q|>1$ 
for all $z\in L_3(n,m)$, which implies
\beqq
\bigg | \int\limits_{L_3(n,m)} \frac{e^{zx}}{\psi_Y(z)-q} \d z \bigg| \le  \int\limits_{L_3(n,m)} \bigg | \frac{e^{zx}}{\psi_Y(z)-q} \bigg | \times |\d z|
< \int\limits_{L_3(n,m)} e^{x m}   \times |\d z| = \left(2\im(z_n)+2\pi /k\right)e^{mx},
\eeqq 
and for every $x>0$ the right-hand side converges to zero as $m\to -\infty$. 

Our next goal is to let $n\to +\infty$ and to prove that the integrals over the two horizontal half-lines 
$L_2(n,-\infty)$ and $L_4(n,-\infty)$ in \eqref{sum_residues} disappear. In order to achieve this we'll need to obtain good upper bounds on
$\psi(z)$ on these horizontal half-lines. Let us consider first the contour $L_2(n,-\infty)$. We will prove that there
exists a constant $C$ such that $|\psi_Y(z)|> C |\im(z_n)|$ for all $z\in L_2(n,-\infty)$. 

Assume that $\epsilon>0$ is a small number and define a domain 
\beqq
{\mathcal D}_{\epsilon}=\{z \in \c: \; |\arg(z)|>\pi/2+\epsilon\},
\eeqq
see figure  \ref{fig_proof_b}. 
Let $L_5=L_5(n)=L_2(n,-\infty) \cap {\mathcal D}_{\epsilon}$ and $L_6=L_6(n)=L_2(n,-\infty) \setminus {\mathcal D}_{\epsilon}$.  
 Following the same steps as in the proof of Theorem \ref{thm_asymptotics} (see estimate (\ref{estimate_L2})) we find that there exists a constant $C_1$ (which does not depend on $n$ or $\epsilon$) 
 such that for all $n$ large enough  we have for all $z\in L_5(n)$
\beq\label{proof_Wq_2}
|\psi_Y(z)|&>&\cos(C_1 \epsilon)\left( \big|Ae^{kz}z^{-a} \big| +  \big |Bz^{b} \big| \right)>\cos(C_1 \epsilon) |B|  |z|^b
\\ \nonumber &>&\cos(C_1 \epsilon) |B|\im(z_n)^b>\cos(C_1 \epsilon) |B|\im(z_n)
\eeq
where in the last estimate we have used the fact that $b\ge 1$ (see \eqref{eqn_AaBb}). 

Next, it can be easily seen from the figure \ref{fig_proof_b} that for all $z$ in the domain ${\mathcal D}_{\epsilon}$ we have 
\beq\label{estimate_for_De}
\re(z)<-|z| \sin(\epsilon),
\eeq
therefore  $|z|^b = o (\exp(-k z) z^{-a})$ when $z\to \infty$, $z\in {\mathcal D}_{\epsilon}$. This fact and 
the asymptotic formula (\ref{psi_Y_asymptotics}) show that there exists a constant $C_2$ such that for all $z \in {\mathcal D}_{\epsilon}$ large enough we have
$|\psi_Y(z)|>C_2 |\exp(-k z) z^{-a}|$. Therefore, for all $n$ large enough we have 
\beqq
|\psi_Y(z)|>C_2  e^{k|\re(z)|}|z|^{-a}, \;\;\; z\in L_6(n). 
\eeqq
Using the above estimate and \eqref{estimate_for_De} we find
\beq\label{proof_Wq_4}
|\psi_Y(z)|>C_2 \sin(\epsilon)^a e^{k|\re(z)|}|\re(z)|^{-a}, \;\;\; z\in L_6(n). 
\eeq
Next, as $n$ increases to $+\infty$, the real part of any $z \in L_6(n)$ decreases to $-\infty$ (see figure \ref{fig_proof_b}), thus for all $n$ 
large enough we have $\exp(k|\re(z)|/2)>|\re(z)|^a$ for all $z\in L_6(n)$. 
At the same time, from the figure \ref{fig_proof_b} we see that for all $z\in L_6(n)$ it is true that 
$|\re(z)|> \tan(\epsilon) |\im(z)|=\tan(\epsilon)(\im(z_n)+\pi/k)$.  
Using this fact and (\ref{proof_Wq_4}) we find that
there exists a constant $C_3=C_3(\epsilon)$ such that for all $n$ large enough
\beq\label{proof_Wq_5}
|\psi_Y(z)|>C_2 \sin(\epsilon)^a e^{\frac{k}2|\re(z)|}|\re(z)|^{-a} e^{\frac{k}2|\re(z)|}>
C_2 \sin(\epsilon)^a e^{\frac{k}2\tan(\epsilon) \im(z_n) }>C_3 \im(z_n), \;\;\; z\in L_6(n). 
\eeq
Combining (\ref{proof_Wq_2}) and (\ref{proof_Wq_5}) we conclude that there exists a constant $C>0$, such that for all $n$ large enough
we have
\beqq
|\psi_Y(z)|> C \im(z_n), \;\;\; z\in L_2(n,-\infty).
\eeqq
A similar estimate for $L_4(n,-\infty)$ can be obtained in the same way. 

Thus setting $z=z(u):=u+\i(\im(z_n)+\pi/k)$ we obtain
\beqq
&&\bigg | \int\limits_{L_2(n,-\infty)} \frac{e^{zx}}{\psi_Y(z)-q} \d z \bigg|=\bigg | \int\limits_{-\infty}^c \frac{e^{z(u)x}}{\psi_Y(z(u))-q} \d u \bigg|
\\&<&
\int\limits_{-\infty}^c \frac{|e^{z(u)x}|}{|\psi_Y(z(u))-q|} \d u<
\int\limits_{-\infty}^c \frac{e^{ux}}{ C |\im(z_n)|-q} \d u=
  \frac{x^{-1} e^{cx}}{ C |\im(z_n)|-q}
\eeqq
and the right hand side converges to zero as $n\to +\infty$. Similarly, the integral over $L_4$ vanishes. 
Thus as $n\to +\infty$ formula (\ref{sum_residues}) becomes
\beqq
\int\limits_{c+\i \r} \frac{e^{zx}}{\psi_Y(z)-q} \d z=\frac{e^{\Phi(q)x}}{\psi_Y'(\Phi(q))}  + \frac{e^{-\zeta_0 x}}{\psi_Y'(-\zeta_0)}
+ 2  \sum\limits_{n\ge 1} \re\left[ \frac{e^{-\zeta_n x}}{\psi_Y'(-\zeta_n)} \right]
\eeqq 
and the left-hand side is equal to $W^{(q)}(x)$ due to Bromwich integral formula (\ref{proof_Wq_1}).

Next, from Proposition \ref{prop_asymptotic_psi}  we know that the asymptotic formula for $\psi_Y'(z)$ can be obtained by differentiating
(\ref{psi_Y_asymptotics}). Therefore, using the asymptotic expression (\ref{zeta_asymptotics}) for $\zeta_n$ we find that 
\beqq
\big | \psi'(-\zeta_n) \big |=k|B| \left( \frac{2n\pi}{k} \right)^{b}+o\left(n^b\right).
\eeqq
Similarly, from (\ref{zeta_asymptotics}) we find that there exists a constant $c$ such that
\beqq
|e^{-\zeta_n x}|\sim c n^{-\frac{x}{k}(a+b)}, \;\;\; n\to +\infty,
\eeqq
thus the terms of the series in the right-hand side of (\ref{eqn_W^q}) decrease as $n^{-b-x(a+b)/k}$. 
According to (\ref{eqn_AaBb}) we always have $b\ge 1$, which implies that 
the series in the right-hand side of (\ref{eqn_W^q}) converges on $\r^+$ and uniformly on $[\epsilon,\infty)$ for each $\epsilon>0$. 
\qed


{\it Proof of Proposition \ref{Laplace_exponent_X}:} 
First, let us assume that $\alpha<1$ and $\hat \alpha<1$.
We start with the L\'evy-Khintchine formula (\ref{Levy_Khinthine2}) with the cutoff function $h(x)\equiv 0$, which gives us
 \beq\label{proof_Laplace_exp_X_1}
\psi(z)=\frac12 \sigma^2 z^2 +\mu z + \hat C \hat \alpha \int\limits_{-\infty}^{0} \left( e^{z x}-1 \right) e^{\hat \beta x} |x|^{-1-\hat \alpha} \d x
+ C  \alpha \int\limits_{0}^{k} \left( e^{z x}-1 \right)e^{- \beta x} x^{-1- \alpha} \d x.
\eeq
The first integral in (\ref{proof_Laplace_exp_X_1}) can be evaluated as follows:
\beqq
&&\hat C \hat \alpha \int\limits_{-\infty}^{0} \left( e^{z x}-1 \right) e^{\hat \beta x} |x|^{-1-\hat \alpha} \d x=
\hat C \hat \alpha \int\limits_{-\infty}^{0} \left( e^{(z+\hat \beta)x}-1 \right) |x|^{-1-\hat \alpha} \d x - 
\hat C \hat \alpha \int\limits_{-\infty}^{0} \left( e^{\hat \beta x}-1 \right) |x|^{-1-\hat \alpha} \d x \\ &=& 
- \hat C \Gamma(1-\hat\alpha) (\hat \beta+z)^{\hat \alpha}+ \hat C \Gamma(1-\hat\alpha) \hat \beta^{\hat \alpha},
\eeqq
where we have used (\ref{int_identity_1}) in the final step. Similarly, the second integral in (\ref{proof_Laplace_exp_X_1}) can be evaluated
with the help of (\ref{eqn_inc_gamma}) and (\ref{eqn_F_alpha_k_z}):
\beqq
C  \alpha \int\limits_{0}^{k} \left( e^{z x}-1 \right)e^{- \beta x} x^{-1- \alpha} \d x&=&
C  \alpha \int\limits_{0}^{k} \left( e^{(z-\beta) x}-1 \right) x^{-1- \alpha} \d x-
C  \alpha \int\limits_{0}^{k} \left( e^{-\beta x}-1 \right) x^{-1- \alpha} \d x \\
&=&C F(\alpha,k,z-\beta)-C F(\alpha,k,-\beta)\\&=&
-Ck^{-\alpha} {}_1F_1(-\alpha,1-\alpha,-k(\beta-z))+C k^{-\alpha}{}_1F_1(-\alpha,1-\alpha,-k\beta)\\&=&
C \alpha (\beta-z)^{\alpha} \gamma(-\alpha,k(\beta-z))-C \alpha \beta^{\alpha} \gamma(-\alpha,k\beta).
\eeqq

This ends the proof in the case $\alpha<1$ and $\hat \alpha<1$. When $\alpha>1$ or $\hat \alpha>1$ the proof would be very similar, the only difference 
is that we would use the cutoff function $h(x)\equiv 1$ 
in (\ref{Levy_Khinthine2}) and perform an extra 
integration by parts in (\ref{int_identity_1}). We leave all the details to the reader.
\qed


\begin{thebibliography}{10}

\bibitem{Asmussen2}
S.~Asmussen.
\newblock {\em Ruin probabilities}.
\newblock World Scientific, Singapore, 2000.

\bibitem{Asmussen}
S.~Asmussen, F.~Avram, and {\relax M.R}.~Pistorius.
\newblock Russian and {A}merican put options under exponential phase-type
  {L}\'evy models.
\newblock {\em Stoch. Proc. Appl.}, 109:79--111., 2004.

\bibitem{Bertoin}
J.~Bertoin.
\newblock {\em {L}\'evy Processes}.
\newblock Cambridge University Press, 1996.

\bibitem{CaCha}
{\relax M.E}.~Caballero and L.~Chaumont.
\newblock Conditioned stable {L}\'evy processes and the {L}amperti
  representation.
\newblock {\em J. Appl. Probab.}, 43:967--983, 2006.

\bibitem{Caballero2008}
{\relax M.E}.~Caballero, {\relax J.C}.~Pardo, and {\relax J.L}.~Perez.
\newblock On the {L}amperti stable processes.
\newblock {\em Probability and Mathematical Statistics}, 30(1):1--28, 2010.

\bibitem{Caballeroetal2009}
{\relax M.E}.~Caballero, {\relax J.C}.~Pardo, and {\relax J.L}.~Perez.
\newblock Explicit identities for {L}\'evy processes associated to symmetric
  stable processes.
\newblock {\em Bernoulli}, 17(1):34--59, 2011.

\bibitem{Cai}
N.~Cai.
\newblock On first passage times of a hyper-exponential jump diffusion process.
\newblock {\em Operations Research Letters}, 37(2):127--134, 2009.

\bibitem{Chaumont2009}
L.~Chaumont, {\relax A.E}.~Kyprianou, and {\relax J.C}.~Pardo.
\newblock Some explicit identities associated with positive self-similar
  {M}arkov processes.
\newblock {\em Stoch. Proc. Appl.}, 119(3):980--1000, 2009.

\bibitem{Cont}
R.~Cont and P.~Tankov.
\newblock {\em Financial modeling with jump processes}.
\newblock Chapman \& Hall, 2004.

\bibitem{Dellnitz2002325}
M.~Dellnitz, O.~Schutze, and Q.~Zheng.
\newblock Locating all the zeros of an analytic function in one complex
  variable.
\newblock {\em Journal of Computational and Applied Mathematics}, 138(2):325 --
  333, 2002.

\bibitem{Erdelyi1955V3}
A.~Erdelyi, editor.
\newblock {\em Higher transcendental functions}, volume~1.
\newblock McGraw-Hill, 1953.

\bibitem{Fourati}
S.~Fourati.
\newblock Explicit solutions of the exit problem for a class of {L}\'evy
  processes. {A}pplications to the pricing of double barrier options.
\newblock {\em Stochastic Process. App., to appear}, 2011.

\bibitem{HuKy}
F.~Hubalek and {\relax A.E}.~Kyprianou.
\newblock Old and new examples of scale functions for spectrally negative
  {L}\'evy processes.
\newblock {\em To appear in Sixth Seminar on Stochastic Analysis, Random Fields
  and Applications, eds R. Dalang, M. Dozzi, F. Russo.}, 2010.

\bibitem{JP}
M.~Jeannin and {\relax M.R}.~Pistorius.
\newblock A transform approach to calculate prices and greeks of barrier
  options driven by a class of {L}\'evy processes.
\newblock {\em Quantitative Finance}, 10(6):629--644, 2010.

\bibitem{Jeffrey2007}
A.~Jeffrey, editor.
\newblock {\em Table of integrals, series and products}.
\newblock Academic Press, 7 edition, 2007.

\bibitem{Jones1985401}
{\relax W.B}.~Jones and {\relax W.J}.~Thron.
\newblock On the computation of incomplete gamma functions in the complex
  domain.
\newblock {\em Journal of Computational and Applied Mathematics}, 12-13:401 --
  417, 1985.

\bibitem{Kou}
S.~Kou.
\newblock A jump diffusion model for option pricing.
\newblock {\em Management Science}, pages 1086--1101, 2002.

\bibitem{Kuz2010c}
A.~Kuznetsov.
\newblock Analytic proof of {P}echerskii-{R}ogozin identity and {W}iener-{H}opf
  factorization.
\newblock {\em Teor. Veroyatnost. i Premenen.}, 55(3):416--431, 2010.

\bibitem{Kuz-beta}
A.~Kuznetsov.
\newblock {W}iener-{H}opf factorization and distribution of extrema for a
  family of {L}\'evy processes.
\newblock {\em Ann. Appl. Probab.}, 20(5):1801--1830, 2010.

\bibitem{Kuz-theta}
A.~Kuznetsov.
\newblock {W}iener-{H}opf factorization for a family of {L}\'evy processes
  related to theta functions.
\newblock {\em J. Appl. Probab.}, 47(4):1023--1033, 2010.

\bibitem{KuzKypJC}
A.~Kuznetsov, {\relax A.E}.~Kyprianou, and {\relax J.C}.~Pardo.
\newblock {M}eromorphic {L}\'evy processes and their fluctuation identities.
\newblock {\em Ann. Appl. Probab., to appear}, 2010.

\bibitem{KyPavS}
A.~Kuznetsov, {\relax A.E}.~Kyprianou, {\relax J.C}.~Pardo, and K.~van Schaik.
\newblock A {W}iener-{H}opf {M}onte {C}arlo simulation technique for {L}\'evy
  processes.
\newblock {\em Ann. Appl. Probab., to appear}, 2010.

\bibitem{KuKyRi}
A.~Kuznetsov, {\relax A.E}.~Kyprianou, and {\relax V. M}.~Rivero.
\newblock {\em The theory of scale functions for spectrally negative {L}\'evy
  processes}.
\newblock submitted, 2011.

\bibitem{KuMo}
A.~Kuznetsov and M.~Morales.
\newblock Computing the finite-time expected discounted penalty function for a
  family of {L}\'evy risk processes.
\newblock {\em submitted}, 2010.

\bibitem{Kyprianou}
{\relax A.E}.~Kyprianou.
\newblock {\em Introductory Lectures on Fluctuations of {L}\'evy Processes with
  Applications}.
\newblock Springer, 2006.

\bibitem{KyPaRi}
{\relax A.E}.~Kyprianou, {\relax J.C}.~Pardo, and V.~M. Rivero.
\newblock Exact and asymptotic n-tuple laws at first and last passage.
\newblock {\em Ann. Appl. Probab.}, 20(2):522--564, 2010.

\bibitem{Levin1980}
{\relax B.Ya}.~Levin.
\newblock {\em Distribution of zeros of entire functions}.
\newblock Number~5 in Translations of {M}athematical {M}onographs. Amer. Math.
  Soc., 1980.

\bibitem{Levin1996}
{\relax B.Ya}.~Levin.
\newblock {\em Lectures on entire functions}.
\newblock Number 150 in Translations of {M}athematical {M}onographs. Amer.
  Math. Soc., 1996.

\bibitem{Mordecki}
{\relax A.L}.~Lewis and E.~Mordecki.
\newblock {W}iener-{H}opf factorization for {L}\'evy processes having positive
  jumps with rational transforms.
\newblock {\em J. Appl. Probab.}, 45(1):118--134., 2008.

\bibitem{Patie2009}
P.~Patie.
\newblock Exponential functional of a new family of {L}\'evy processes and
  self-similar continuous state branching processes with immigration.
\newblock {\em Bull. Sci. Math.}, 133(4):355--382, 2009.

\bibitem{Pecherskii}
E.~Pecherskii and {\relax B.A}.~Rogozin.
\newblock On joint distribution of random variables associated with
  fluctuations of a process with independent increments.
\newblock {\em Theory Probab. Appl.}, 14:410--423., 1969.

\bibitem{Pistorius}
M.~Pistorius.
\newblock On maxima and ladder processes for a dense class of {L}\'evy process.
\newblock {\em L. Appl. Probab.}, 43(1):208--220, 2006.

\bibitem{Winitzki03}
S.~Winitzki.
\newblock Computing the incomplete gamma function to arbitrary precision.
\newblock In {\em Lecture Notes in Computer Science}, pages 790--799. Springer
  Verlag, 2003.

\bibitem{Yakoubsohn}
{\relax J.C}.~Yakoubsohn.
\newblock Numerical analysis of a bisection-exclusion method to find zeros of
  univariate analytic functions.
\newblock {\em J. Complex.}, 21:652--690, October 2005.

\bibitem{Ying_Katz}
X.~Ying and {\relax I.N}.~Katz.
\newblock A reliable argument principle algorithm to find the number of zeros
  of an analytic function in a bounded domain.
\newblock {\em Numerische Mathematik}, 53:143--163, 1988.

\end{thebibliography}


\end{document}